\def\Sb{\mathbf{S}}
\newcommand{\E}{\mathbb E}
\renewcommand{\S}{\mathbf S}
\renewcommand{\P}{\mathbb P}
\long\def\ignore#1{}
\def\eps{\varepsilon}
\title{Switchover phenomenon for general graphs}
\author{
Dániel Keliger\\
Department of Stochastics, Institute of Mathematics,\\
Budapest University of Technology and Economics, H-1111\\
Budapest, Hungary
\and
László Lovász
\and
Tamás Móri\\
Alfr\'ed R\'enyi Institute of Mathematics, H-1053 Budapest, Hungary
\and
Gergely \'Odor\\ Department of Network and Data Science,\\
Central European University, A-1100 Vienna, Austria}
\date{April 2023}
\begin{document}
	
\newtheorem{theorem}{Theorem}[section]
\newtheorem{prop}[theorem]{Proposition}
\newtheorem{algorithm}[theorem]{Algorithm}
\newtheorem{lemma}[theorem]{Lemma}
\newtheorem{claim}{Claim}
\newtheorem{corollary}[theorem]{Corollary}
\theorembodyfont{\rmfamily}
\newtheorem{remark}[theorem]{Remark}
\newtheorem{definition}[theorem]{Definition}
\newtheorem{example}{Example}
\newtheorem{conj}{Conjecture}
\newtheorem{prob}{Problem}
\newtheorem{step}{Step}
\newtheorem{alg}{Algorithm}

\newenvironment{proof}{\medskip\noindent{\bf Proof. }}{\hfill$\square$\medskip}
\newenvironment{proof*}[1]{\medskip\noindent{\bf Proof of #1.}}{\hfill$\square$\medskip}

	\maketitle

\tableofcontents

\begin{abstract}
We study SIR type epidemics on graphs in two scenarios: (i) when the initial
infections start from a well connected central region, (ii) when initial
infections are distributed uniformly. Previously, Ódor et al. demonstrated on a
few random graph models that the expectation of the total number of infections
undergoes a switchover phenomenon; the central region is more dangerous for
small infection rates, while for large rates, the uniform seeding is expected
to infect more nodes. We rigorously prove this claim under mild, deterministic
assumptions on the underlying graph. If we further assume that the central
region has a large enough expansion, the second moment of the degree
distribution is bounded and the number of initial infections is comparable to
the number of vertices, the difference between the two scenarios is shown to be
macroscopic.
\end{abstract}

\section{Introduction}

We study the propagation of a disease on a network, and in particular the
``switchover'' phenomenon established in \cite{OCKLK,OCKLK2}. Informally, the
phenomenon means the following. We have a network (describing the network of
interactions of people in a country), which has a denser "central region" and a
sparser "periphery". We compare the total number of nodes that get infected if
a given number of seeds (initial infections) are distributed uniformly and
randomly in the central region and in the whole graph, respectively. The
switchover phenomenon means that for a low infection rate, an epidemics
starting in the central region is worse (results in a larger epidemics), but
this switches over so that the epidemics starting uniformly over the whole
country is worse.

In \cite{OCKLK}, the authors have shown by simulation that this phenomenon occurs in many
networks (not all), and established it rigorously for some very simple
networks. In \cite{OCKLK2}, some mathematical conditions were formulated
(without proof), under which the switchover phenomenon occurs. The goal of this
paper is to generalize those results and prove them mathematically.

Our model for the spread of infection is the SIR(1) model (which is one of the
simplest). In this model, we have a finite graph $G$. A node can be in one of
three states: susceptible (S), infected (I) or resistant (R). At each step, if
a susceptible node has an infected neighbor, then it gets infected by this
neighbor with probability $\beta$. If it has several infected neighbors, then
the events that these infect the node are independent. The node becomes
infected if at least one of its infected neighbors infect it. An infected node
recovers deterministically after one step, and will be resistant from then on,
which means that it does not infect and cannot be infected. If you think of a
time scale where one step as a week, then this may be a reasonable assumption;
every event (getting infected and then passing it on) is recorded on a weekly
scale.

The main advantage of the SIR(1) model for us is that it is equivalent with a
percolation problem. A proof of this simple observation was given in
\cite{OCKLK2}. Briefly, it is not hard to see that we can decide about each
edge in advance, independently and with probability $\beta$, whether it is
going to pass on the infection, at any time when one of its endpoints is
infected and the other one is susceptible. Our model guarantees that every edge
has at most one chance to be in this situation. In other words, we keep every
edge with probability $\beta$ and delete the remaining edges; this way we get
an edge-percolated graph $G^\beta$. For a seed set $S$, we denote by
$G^\beta(S)$ the union of those components of $G^\beta$ that contain at least
one node of $S$. Then $|G^{\beta}(S)|$ nodes will be infected at one point
during the epidemic in total.

Our goal is to compare the expectations of $|G^{\beta}(\Sb_1)|$ and
$|G^{\beta}(\Sb_2)|$, where $\Sb_1$ is a random subset of the central region
and $\Sb_2$ is a random subset of the whole node set. In Section
\ref{sec:weak}, we show that (under quite general conditions) for very small
$\beta$, seeding the central region is worse, but for $\beta$ very near to $1$,
seeding the whole graph uniformly is worse (Theorem \ref{THM:EXTREME}).
However, such values of $\beta$ are unlikely to occur in real life, and also
the differences in epidemic sizes are minuscule. We call this ``weak
switchover'', and we give its formal definition in Section
\ref{sec:definitions}.

In Section \ref{sec:strong}, we formulate conditions on the graph under which we can work with values of
$\beta$ in a more reasonable range, and we can establish that the difference
between the sizes of the epidemics starting from $\Sb_1$ and $\Sb_2$ is of the
same order of magnitude as the whole graph (we call this ``strong switchover''). These
conditions on the graph (Theorem \ref{t:main}) are tighter than for weak switchover, but they
are still reasonable, and can be satisfied by real networks.

As an application of our results in Section \ref{sec:strong}, we prove that
weak switchover occurs on Chung-Lu random graphs with power-law degree
distribution \cite{chung2002connected} in Section \ref{sec:appl}. This result
was stated in \cite{OCKLK}, along with a non-rigorous proof.

\medskip

{\bf Relationship with distribution-free graph models.} Epidemics are often
studied either theoretically or by simulation on random graph models
\cite{newman2018networks}. In this paper, our goal is different: we aim to find
deterministic conditions on the graph, which give rise to the switchover
phenomenon (in expectation, where the randomness only comes from the epidemic
or percolation process). Such combinatorial results, which are studied with a
network science application in mind, are called \textit{distribution-free} in
the literature \cite{fox2020finding}. The main advantage of the
distribution-free approach is that deterministic conditions can be verified on
real networks, as opposed to the results on random graph distributions, where
we can only hope that the results also apply to real networks. Moreover, one
can go from results with deterministic conditions to results on random graphs
relatively easily (as we do in Section \ref{sec:appl}), whereas going in the
opposite direction seems much more difficult.

Proving facts that hold with high probability for random graphs for
deterministic graphs with appropriate properties goes back (at least) to the
study of quasirandom graphs \cite{CGW1989}. In the network science setting, the
study of distribution-free graph models was started by Fox et al.
\cite{fox2020finding}, and several papers followed. We refer to
\cite{roughgarden2020distribution} for a review. The deterministic constraints
studied in this topic include conditions on the triadic closure
\cite{fox2020finding}, on heterogenous degree distributions
\cite{borassi2016algorithms} and on the expansion properties
\cite{barmpalias2019idemetric} of the graphs. While one of our main conditions
is also a deterministic expansion property (a stronger one than in
\cite{barmpalias2019idemetric}), our conditions and proof techniques are
different from all previous papers that we are aware of in this topic.

\section{Results}

\subsection{Notation and setup}
\label{sec:definitions}

Let $G=(V,E)$ be a simple graph on $n$ nodes. We use the notation $|G|=|V|=n$.
For a subset $S \subseteq V$, $G(S)$ denotes the union of connected components
meeting $S$. As usual, we denote by $G[S]$ the subgraph induced by $S$.
$e(K,L)$ stands for the number of edges between $K,L \subseteq V.$ The average
degree and the second moment of the set $K \subseteq V$ is denoted by
\begin{align*}
	\overline{\deg}(K):=&\frac{1}{|K|}\sum_{v \in K}\deg(v), \\
	\overline{\deg^2}(K):=&\frac{1}{|K|}\sum_{v \in K}\deg^2(v).
\end{align*}

The path visiting vertices $v_1,v_2,\dots,v_k \in V $ is denoted by $v_1v_2
\dots v_k$. For neighboring vertices $u,v \in V$ we write $u \sim v$ and for $K
\subseteq V$, $\mathcal{N}(K)$ stands for $\{v \in V \setminus K \mid \exists u
\in K : v \sim u  \},$ i.e. the neighborhood of $K$.

We consider graphs with a specified subset $C\subseteq V$ (modeling the {\it
central region}) of size $|C|=r=cn$. Here, $0<c<1$ is considered to be
``macroscopic'', and $C$ will be denser than average in a sense to be defined
later. Throughout, we use the notation $G_1=G[C]$ and $G_2=G \setminus E(G_1)$.

For $0\le\beta\le1$,  $G^\beta$ denotes the percolation of $G$ with edge
retention probability $\beta$; in other words, the graph obtained by selecting
each edge of $G$ independently with probability $\beta$, and deleting the
unselected edges.

Usually, the set $S \subseteq V$ represents a \emph{deterministic} seed of
initial infections. We will be interested in \emph{random} seeds $\S \sim
\operatorname{Uni}(L,k)$ sampled uniformly from the $k$-subsets of a set $L
\subseteq V$ for some $k=sn$ ($0<s<c$). We think of $L$ as a macroscopic
subset; typical choices are $L=V$ and $L=C$. The corresponding random subsets
for $L=C$ and $L=V$ are $\S_C \sim \operatorname{Uni}(C,k)$ and $\S_V \sim
\operatorname{Uni}(V,k).$

In our considerations, we generate the random graph $G^\beta$ and the seed set
$\S$ independently. We let $\P_{\S}$ and $\E_{\S}$ denote the probability and
expectation if only the seed $\S$ is randomized, and define $\P_\beta$ and
$\E_\beta$ analogously when only the graph $G^\beta$ is randomized. We use no
subscript if probability and expectation are taken over both random choices.

Now we come to our two main definitions.

\begin{definition}\label{def:weak-swichover}
We say the graph $G$ exhibits a \emph{weak switchover phenomenon} with seed
size $k$ ($1\le k\le |C|$), if there are $\beta_1,\beta_2\in(0,1)$ such that
for $\S_C \sim \operatorname{Uni}(C,k)$, and $\S_V \sim
\operatorname{Uni}(V,k)$ we have
\[
\E\big(|G^{\beta_1}(\S_C)\big)> \E\big(|G^{\beta_1}(\S_V)|\big),
\]
but
\[
\E\big(|G^{\beta_2}(\S_C)\big)<\E\big(|G^{\beta_2}(\S_V)|\big).
\]
\end{definition}

Note that Definition \ref{def:weak-swichover} only requires that there is some
difference between $\E(|G^\beta(\S_C)|)$ and $\E(|G^\beta(\S_V)|)$, where this
difference could be small, even vanishing as $n \to \infty$. In a more robust
version, we require these differences to constitute a positive fraction of the
whole population. To make an exact definition, we need to consider a sequence
of graphs whose size tends to infinity:

\begin{definition}\label{def:strong_switchover}
We say the sequence of graphs $(G_n,C_n)$ exhibits a \emph{strong switchover
phenomenon} with seed sizes $k_n$, if there are real numbers $\delta>0,$
$0<\beta_1(n),\beta_2(n)<1$ such that for $\S_{n,V} \sim
\operatorname{Uni}(V(G_n),k_n)$, and $\S_{n,C} \sim
\operatorname{Uni}(C_n,k_n)$ we have
\[
\E\big(|G^{\beta_1}(\S_{n,V})\big)\geq \E\big(|G^{\beta_1}(\S_{n,C})|\big)+\delta |V(G_n)|,
\]
but
\[
\E\big(|G^{\beta_2}(\S_{n,V})\big) \leq \E\big(|G^{\beta_2}(\S_{n,C})|\big)-\delta |V(G_n)|.
\]
for large enough $n$.
\end{definition}

\subsection{Weak switchover}
\label{sec:weak}

We start by elementary remarks concerning the cases when $\beta\to 0$ and
$\beta\to~1$. It is clear that if $\beta\to 0$, then $\E(|G^\beta(S)|)\to |S|$,
while if $\beta\to 1$, then $\E(|G^\beta(S)|)\to~n$ for every nonempty set $S$
and connected $G$.

The case of small $\beta$ is straightforward, since the seeds and those nodes
reached in one step will dominate. The probability that a particular path of
length 2 is retained in $G^\beta$ is at most $\beta^2$, so with probability
$1-O(\beta^2)$, only neighbors of $S$ get infected, and each such neighbor is
infected by only one seed (here $O$ refers to $\beta\to 0$). Hence for any
subset $S\subseteq V$,
\begin{align}\label{eq:small_beta_original}
\E \left(|G^\beta(S)\right) = |S| +\beta e(S,V \setminus S)
+ O\left(\beta^2\right).
\end{align}

The asymptotics at $\beta\to 1$ is more complicated. Assume that $G$ has the
(mild) property that

\smallskip

($*$) $G$ has minimum degree $d$, it is not $d$-regular and the only edge-cuts
in $G$ with at most $d$ edges are the stars of minimum degree nodes.

\smallskip

Let $Y\subseteq V$ be the set of nodes with degree $d$. Set $\gamma=1-\beta$.
With probability at least $1-O(\gamma^{d+1})$, at most $d$ edges of $G$ are
missing in $G^\beta$. By ($*$), in this case $G^\beta$ is either a connected
spanning subgraph of $G$, or it has a single isolated node in $Y$. The
probability of the latter event is $\gamma^d$ for any given node in $Y$.

This implies that with probability at least $1-O(\gamma^{d+1})$, for every set
$S\subseteq V$, $|S|\ge 2$, the infected graph $G^\beta(S)$ will miss at most
one node in $Y\setminus S$. Hence
\begin{align}	\label{eq:small_gamma}
	\E_\beta(|G^\beta(S)|) = n - |Y\setminus S|\gamma^d + O(\gamma^{d+1}).
\end{align}

Formulas \eqref{eq:small_beta_original} and \eqref{eq:small_gamma} imply:

\begin{theorem}\label{THM:EXTREME}
Let $G$ be a connected graph, and $S_1,S_2\subseteq V$, $|S_1|=|S_2|$.

\smallskip

{\rm(a)} If $e(S_1, V \setminus S_1) >e(S_2,V \setminus S_2)$ and
$\beta$ is sufficiently close to $0$, then
$\E(|G^\beta(S_1)|)>\E(|G^\beta(S_2)|)$.

\smallskip

{\rm(b)}  If $G$ has property $(*)$, $|S_1\cap Y| > |S_2\cap Y|$, and $\beta$
is sufficiently close to $1$, then
$\E(|G^\beta(S_1)|)<\E(|G^\beta(S_2)|)$.
\end{theorem}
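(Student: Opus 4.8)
The plan is to read both parts directly off the two asymptotic expansions already derived in the text. For part (a), apply formula \eqref{eq:small_beta_original} to both seed sets. Since $|S_1|=|S_2|$, subtracting gives
\[
\E(|G^\beta(S_1)|)-\E(|G^\beta(S_2)|)=\beta\bigl(e(S_1,V\setminus S_1)-e(S_2,V\setminus S_2)\bigr)+O(\beta^2).
\]
By hypothesis the coefficient of $\beta$ is a strictly positive integer, hence at least $1$, so for $\beta$ small enough the $O(\beta^2)$ error term is dominated and the whole expression is positive. This is essentially immediate; the only thing worth spelling out is that the $O(\beta^2)$ constant is uniform (it depends only on $G$, e.g. on the number of paths of length $2$), which is clear from the derivation preceding \eqref{eq:small_beta_original}.

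For part (b), I would argue symmetrically using \eqref{eq:small_gamma} with $\gamma=1-\beta$. Applying it to $S_1$ and $S_2$ and subtracting,
\[
\E_\beta(|G^\beta(S_1)|)-\E_\beta(|G^\beta(S_2)|)=\bigl(|Y\setminus S_2|-|Y\setminus S_1|\bigr)\gamma^d+O(\gamma^{d+1}).
\]
Now $|Y\setminus S_i|=|Y|-|Y\cap S_i|$, so $|Y\setminus S_2|-|Y\setminus S_1|=|Y\cap S_1|-|Y\cap S_2|$, which by hypothesis is a strictly positive integer, hence at least $1$. Therefore the leading coefficient of $\gamma^d$ is positive, and for $\gamma$ small (i.e. $\beta$ close to $1$) the $O(\gamma^{d+1})$ term cannot overturn the sign, giving $\E(|G^\beta(S_1)|)-\E(|G^\beta(S_2)|)<0$ as claimed. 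One should note that $|S_i|\ge 2$ is needed for \eqref{eq:small_gamma} to apply; if $|S_1|=|S_2|=1$ the statement is vacuous or can be handled separately, but since we are told $|S_1\cap Y|>|S_2\cap Y|\ge 0$, we have $|S_1|\ge 1$, and the interesting regime has $|S_i|\ge 2$ — I would simply add the standing assumption $|S_1|=|S_2|\ge 2$ or observe that the single-seed case follows from part (a)'s style of argument directly.

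The main (minor) obstacle is bookkeeping around the error terms: one must confirm that the implied constants in the $O(\cdot)$ notation in \eqref{eq:small_beta_original} and \eqref{eq:small_gamma} depend only on the fixed graph $G$ (not on the seed sets), so that ``sufficiently close to $0$'' resp. ``sufficiently close to $1$'' can be chosen uniformly. Both expansions are proved by a union bound over a bounded number of configurations (paths of length $2$ in the first case, edge-cuts of size $\le d$ in the second), so this uniformity is automatic, and the proof is complete.
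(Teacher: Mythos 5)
Your proposal is correct and matches the paper exactly: the paper derives Theorem \ref{THM:EXTREME} directly from the expansions \eqref{eq:small_beta_original} and \eqref{eq:small_gamma} by the same subtraction-and-sign argument, with the leading coefficient being a positive integer in each case. Your added remarks on the uniformity of the $O(\cdot)$ constants and the $|S|\ge 2$ caveat for \eqref{eq:small_gamma} are sensible but not points the paper dwells on.
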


Coming to random seed sets, it will be easy to derive from
\eqref{eq:small_beta_original} and \eqref{eq:small_gamma} the following.

\begin{theorem}\label{THM:CENTRAL-EXTREME}
Let $G$ be a connected graph and $2\le k < r$.

\smallskip

{\rm(a)} If
\begin{align*}
\frac{r-k}{r-1}\overline{\deg}(C)> \frac{n-k}{n-1}\overline{\deg}(V),
\end{align*}
then $\E(|G^\beta(\S_C)|)>\E(|G^\beta(\S_V)|)$ if $\beta$ is
sufficiently close to $0$.

\smallskip

{\rm(b)} If $G$ has property $(*)$ and
\begin{align*}
\frac{|Y\cap C|}{r}<\frac{|Y|}{n},
\end{align*}
then $\E(|G^\beta(\S_C)|)<\E(|G^\beta(\S_V)|)$ if $\beta$ is
sufficiently close to $1$.
\end{theorem}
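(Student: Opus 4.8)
The plan is to obtain both parts by averaging the two asymptotic expansions of Section~\ref{sec:weak} over the random seed. Since $G$ is a fixed finite graph, the error terms in \eqref{eq:small_beta_original} and in \eqref{eq:small_gamma} hold with constants uniform over the (finitely many) $k$-subsets $S$, so for $\S\sim\operatorname{Uni}(L,k)$ we may legitimately take $\E_\S$ of both sides. This gives
\[
\E\big(|G^\beta(\S)|\big)=k+\beta\,\E_\S\big(e(\S,V\setminus\S)\big)+O(\beta^2)\qquad(\beta\to0),
\]
and, since $k\ge2$ lets us apply \eqref{eq:small_gamma} (which in turn uses property~$(*)$),
\[
\E\big(|G^\beta(\S)|\big)=n-\gamma^d\,\E_\S\big(|Y\setminus\S|\big)+O(\gamma^{d+1})\qquad(\gamma=1-\beta\to0).
\]
In both displays the $\S$-independent term ($k$, resp.\ $n$) cancels in the difference $\E(|G^\beta(\S_C)|)-\E(|G^\beta(\S_V)|)$, so it suffices to prove the strict inequalities $\E_{\S_C}(e(\S_C,V\setminus\S_C))>\E_{\S_V}(e(\S_V,V\setminus\S_V))$ for (a) and $\E_{\S_C}(|Y\setminus\S_C|)>\E_{\S_V}(|Y\setminus\S_V|)$ for (b); the sign of the whole difference is then dictated by this next-order coefficient once $\beta$ is close enough to $0$, resp.\ $\gamma$ close enough to $0$.

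For (a) I would evaluate $\E_\S(e(\S,V\setminus\S))$ by linearity of expectation over ordered adjacent pairs. For $\S_V$ every vertex and edge is treated symmetrically, and using $2e(G)=n\,\overline{\deg}(V)$ one gets
\[
\E_{\S_V}\big(e(\S_V,V\setminus\S_V)\big)=2e(G)\cdot\tfrac{k}{n}\cdot\tfrac{n-k}{n-1}=k\cdot\tfrac{n-k}{n-1}\,\overline{\deg}(V).
\]
For $\S_C$ one must distinguish whether the far endpoint of an edge out of $\S_C$ lies inside or outside $C$; writing $m$ for the number of edges of $G[C]$ and using $e(C,V\setminus C)=r\,\overline{\deg}(C)-2m$, a short computation gives $\E_{\S_C}(e(\S_C,V\setminus\S_C))=k\,\overline{\deg}(C)-\frac{2k(k-1)}{r(r-1)}m$. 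Since $2m\le\sum_{v\in C}\deg(v)=r\,\overline{\deg}(C)$, this is at least $k\cdot\frac{r-k}{r-1}\overline{\deg}(C)$, so the hypothesis $\frac{r-k}{r-1}\overline{\deg}(C)>\frac{n-k}{n-1}\overline{\deg}(V)$ gives the required strict inequality. (Thus the stated condition is a clean \emph{sufficient} one, slightly stronger than the exact inequality, which would still involve $m$.)

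For (b), $\E_\S(|Y\setminus\S|)=|Y|-\E_\S(|Y\cap\S|)=|Y|-\tfrac{k}{|L|}\,|Y\cap L|$, hence $\E_{\S_C}(|Y\setminus\S_C|)-\E_{\S_V}(|Y\setminus\S_V|)=\tfrac{k}{n}|Y|-\tfrac{k}{r}|Y\cap C|$, which is positive exactly when $\frac{|Y\cap C|}{r}<\frac{|Y|}{n}$, i.e.\ under the hypothesis of (b). Plugging this into the $\gamma$-expansion above completes the argument.

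I do not expect a genuine obstacle: the only points needing care are the uniformity of the error terms over $S$ (which is what makes the averaging valid) and the requirement $k\ge2$ needed to invoke \eqref{eq:small_gamma}; everything else is linearity of expectation together with the elementary bound $2m\le r\,\overline{\deg}(C)$ used to pass from the exact inequality to the stated hypothesis in part (a).
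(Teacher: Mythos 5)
Your proposal is correct and follows essentially the same route as the paper: both parts average the expansions \eqref{eq:small_beta_original} and \eqref{eq:small_gamma} over the random seed, your ordered-pair computation of $\E_\S(e(\S,V\setminus\S))$ reproduces the paper's Lemma \ref{l:V_minus_S} exactly (with $e(C,C)=2m$), and the passage to the stated hypothesis via $2m\le r\,\overline{\deg}(C)$ is the same estimate the paper uses. Your part (b) computation likewise matches the paper's (and is in fact cleaner, as the paper's display contains a typo writing $|Y\cup C|$ for $|Y\cap C|$).
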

\begin{remark}
For fixed $c$ and small enough $s$ it is enough to assume
$\overline{\deg}(C)>\overline{\deg}(V)$ for part (a) of Theorem
\ref{THM:CENTRAL-EXTREME} as
\[
1 \leq \frac{r-1}{r-k}\frac{n-k}{n-1}=1+O(s).
\]
\end{remark}
\begin{corollary}\label{COR:CENTRAL-EXTREME}
If both conditions {\rm(a)} and {\rm(b)} above are satisfied, then $G$ exhibits
the weak switchover phenomenon for seed sets of size $k$.
\end{corollary}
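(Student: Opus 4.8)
The plan is to read the two witnessing infection rates directly off Theorem~\ref{THM:CENTRAL-EXTREME}, so the argument is essentially a one-line unwinding of Definition~\ref{def:weak-swichover}. First, since condition (a) is assumed, part (a) of Theorem~\ref{THM:CENTRAL-EXTREME} yields a threshold $\eps_1 \in (0,1)$ such that $\E(|G^\beta(\S_C)|) > \E(|G^\beta(\S_V)|)$ for every $\beta \in (0,\eps_1)$. I would fix any such $\beta_1$; it satisfies the first inequality required by Definition~\ref{def:weak-swichover}.

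Next, since $G$ has property $(*)$ and condition (b) is assumed, part (b) of Theorem~\ref{THM:CENTRAL-EXTREME} yields a threshold $\eps_2 \in (0,1)$ such that $\E(|G^\beta(\S_C)|) < \E(|G^\beta(\S_V)|)$ for every $\beta \in (1-\eps_2, 1)$. I would fix any such $\beta_2$; it satisfies the second required inequality. Since $\beta_1, \beta_2 \in (0,1)$, the pair $(\beta_1, \beta_2)$ is exactly what Definition~\ref{def:weak-swichover} asks for, so $G$ exhibits the weak switchover phenomenon for seed sets of size $k$.

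I do not expect any genuine obstacle here: the substance is entirely contained in Theorem~\ref{THM:CENTRAL-EXTREME}, which itself rests on the expansions \eqref{eq:small_beta_original} and \eqref{eq:small_gamma}. The only point that merits a moment's attention is that the phrases ``$\beta$ sufficiently close to $0$'' and ``$\beta$ sufficiently close to $1$'' in that theorem can legitimately be read as ``$\beta$ ranging over a nonempty open subinterval of $(0,1)$'', so that $\beta_1$ and $\beta_2$ are admissible infection rates strictly between $0$ and $1$; this is immediate from the shape of the two asymptotic formulas, whose error terms are $O(\beta^2)$ and $O(\gamma^{d+1})$ respectively.
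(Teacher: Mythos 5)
Your proposal is correct and follows the same logical route as the paper: one value of $\beta$ near $0$ from the small-$\beta$ expansion, one near $1$ from the small-$\gamma$ expansion, matched against Definition~\ref{def:weak-swichover}. The only difference is presentational: the paper never proves Theorem~\ref{THM:CENTRAL-EXTREME} separately but instead folds its derivation (via Lemma~\ref{l:V_minus_S} applied to \eqref{eq:small_beta_original}, and the $|Y\setminus\S|$ computation applied to \eqref{eq:small_gamma}) into the proof of the corollary itself, whereas you invoke the theorem as a black box — which is legitimate given that it is stated as a standalone result.
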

Note that the both conditions say that $C$ has larger degrees than average.

In conclusion, a weak switchover phenomenon occurs for all graphs under very mild
hypotheses, but for unrealistically extreme values of $\beta$, and leading only
to minuscule differences. Our goal in the next section is to exhibit a strong
switchover with much more reasonable values of $\beta$.

\subsection{Strong switchover}
\label{sec:strong}

To establish the case of small $\beta$ for strong switchover is similar to the
analogous case for weak switchover: again seeds and their neighbors will play
the main role. We have to do more careful estimates, involving the spectrum of
$G$. Our main tool is the following refined version of \eqref{eq:small_gamma}.

\begin{lemma}\label{LEM:SMALL-BETA-STRONG}
Let $L\subseteq V$, $m=|L|$, and let $\S$ be a random
$k$-subset of $L$. Then
\begin{align*}
\E \big( |G^{\beta}(\S)| \big) = k + k \Big(\overline{\deg}(L)-\frac{k-1}{m-1} \frac{1}{m}e(L,L)\Big)\beta + R ,
\end{align*}
where
\[
|R| \le \overline{\deg^2}(V)\beta^2 n.
\]
\end{lemma}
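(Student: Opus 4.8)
The plan is to first prove the estimate for a \emph{deterministic} seed set, and then average over $\S$. Concretely, I will show that for \emph{every} $S\subseteq V$ with $|S|=k$,
\begin{equation*}
\Big|\,\E_\beta\big(|G^\beta(S)|\big) - k - \beta\, e(S,V\setminus S)\,\Big| \;\le\; \overline{\deg^2}(V)\,\beta^2 n,
\end{equation*}
a bound uniform in $S$; the constraint $S\subseteq L$ will only be used afterwards, to evaluate $\E_\S\, e(\S,V\setminus\S)$ exactly. Since $|G^\beta(S)|=k+\sum_{w\in V\setminus S}\one\{w\in G^\beta(S)\}$, taking $\E_\beta$ reduces everything to estimating $\P_\beta\big(w\in G^\beta(S)\big)$ for each fixed $w\notin S$.

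For such a $w$, let $p_w:=e(\{w\},S)$ be the number of neighbours of $w$ lying in $S$, and let $B_w$ be the event that at least one of those $p_w$ edges survives in $G^\beta$; then $B_w\subseteq\{w\in G^\beta(S)\}$. A union bound gives $\P_\beta(B_w)\le \beta p_w$, while the second Bonferroni inequality (applied to the $p_w$ independent edge-survival events) gives $\P_\beta(B_w)\ge \beta p_w-\binom{p_w}{2}\beta^2$. For the matching upper bound on $\P_\beta(w\in G^\beta(S))$, observe that on the event $\{w\in G^\beta(S)\}\setminus B_w$ the vertex $w$ reaches $S$ only through a surviving path of length $\ge 2$, whose first two edges form a surviving path $w\,a\,b$ of length exactly $2$ emanating from $w$; there are at most $\sum_{a\sim w}\deg(a)$ such length-$2$ paths, so by a union bound $\P_\beta\big(\{w\in G^\beta(S)\}\setminus B_w\big)\le \beta^2\sum_{a\sim w}\deg(a)$, hence $\P_\beta(w\in G^\beta(S))\le \beta p_w+\beta^2\sum_{a\sim w}\deg(a)$. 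Now sum over $w\in V\setminus S$: using $\sum_{w\notin S}p_w=e(S,V\setminus S)$, the bound $\sum_{w\notin S}\binom{p_w}{2}\le\tfrac12\sum_{x\in V}\deg(x)^2$, and the identity $\sum_{w\in V}\sum_{a\sim w}\deg(a)=\sum_{x\in V}\deg(x)^2=\overline{\deg^2}(V)n$, both the lower and the upper deviation of $\sum_{w\notin S}\P_\beta(w\in G^\beta(S))$ from $\beta\, e(S,V\setminus S)$ are at most $\overline{\deg^2}(V)\beta^2n$, which is the displayed claim.

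It remains to average over $\S\sim\operatorname{Uni}(L,k)$. Writing $e(\S,V\setminus\S)=\sum_{v\in\S}\deg(v)-e(\S,\S)$ with $e(\S,\S)=2|E(G[\S])|$, linearity of expectation gives $\E_\S\big[\sum_{v\in\S}\deg(v)\big]=\tfrac{k}{m}\sum_{v\in L}\deg(v)=k\,\overline{\deg}(L)$, and $\E_\S\big[e(\S,\S)\big]=\tfrac{k(k-1)}{m(m-1)}\,e(L,L)$ since each ordered pair of adjacent vertices of $L$ lies inside $\S$ with probability $\tfrac{k(k-1)}{m(m-1)}$. Hence $\E_\S\, e(\S,V\setminus\S)=k\big(\overline{\deg}(L)-\tfrac{k-1}{m-1}\tfrac1m e(L,L)\big)$. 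Taking $\E_\S$ of the uniform bound above (which therefore survives the averaging) yields $\E\big(|G^\beta(\S)|\big)=k+k\big(\overline{\deg}(L)-\tfrac{k-1}{m-1}\tfrac1m e(L,L)\big)\beta+R$ with $|R|\le \overline{\deg^2}(V)\beta^2n$.

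The main obstacle is the upper bound on $\P_\beta(w\in G^\beta(S))$: a naive union bound over all paths from $w$ to $S$ diverges, since we assume nothing like $\beta\cdot(\text{max degree})<1$. The point is that a connection using a path of length $\ge2$ already forces a surviving length-$2$ path out of $w$, of which there are only $\sum_{a\sim w}\deg(a)$; this keeps the error at order $\beta^2$, and after the global sum telescopes to $\sum_{x}\deg(x)^2$ it matches the stated constant $\overline{\deg^2}(V)\beta^2n$ (with a little room to spare on the lower side).
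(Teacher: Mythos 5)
Your proof is correct, and its skeleton matches the paper's: the first-order term comes from seeds plus directly infected neighbours, the error is controlled by length-$2$ paths and sums to $\sum_x\deg(x)^2=\overline{\deg^2}(V)n$, and the averaging step is exactly the paper's Lemma~\ref{l:V_minus_S}. The genuine difference is in the upper bound and in the order of operations. The paper works with the random seed throughout and, for the upper bound, deletes the edges between $v$ and $\S$ to form an auxiliary graph $H$, introduces the notion of a \emph{proper} (non-triangle) length-$2$ path and the event $\mathcal{A}_v(G)$, and bounds $1-\P(\mathcal{A}_v(G))$ via a product formula over the neighbours of $v$ using independence of edge-disjoint proper paths. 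You instead prove a bound uniform in the deterministic seed $S$ and only average at the end, and your upper bound is a bare union bound: on $\{w\in G^\beta(S)\}\setminus B_w$ some length-$2$ path out of $w$ survives, and there are at most $\sum_{a\sim w}\deg(a)$ of them. This is shorter, sidesteps the paper's independence/properness bookkeeping entirely (which is only needed there to squeeze out the slightly sharper constant $\overline{\deg^2}(V)-\overline{\deg}(V)$ in the refined Lemma~\ref{LEM:SMALL-BETA-STRONG-X}), and still lands within the stated bound $|R|\le\overline{\deg^2}(V)\beta^2 n$; the uniform-in-$S$ formulation is also marginally more flexible, since it gives the estimate for every fixed seed set and not just in expectation over $\S$.
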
	

Applying this lemma with $L=V$ and $L=C$, we will get that for an appropriate
$\beta$, seeding the central region is substantially more dangerous than
seeding the whole node set. More exactly:

\begin{corollary}\label{COR:SMALL-BETA-MAIN}
Assume that
\begin{equation}\label{EQ:AVE-DEG}
\frac{r-k}{r-1}\overline{\deg}(C)-\frac{n-k}{n-1}\overline{\deg}(V) \geq c_1>0.
\end{equation}
Let
\begin{equation}\label{EQ:SMALL-BETA-CHOICE}
0<\beta\le \frac{1}{4} \frac{c_1}{\overline{\deg^2}(V)}s.
\end{equation}
Then
\[
\E\left (| G^\beta(\S_C)| \right)-\E\left (| G^\beta(\S_V)| \right) \ge \frac{1}{2}c_1  \beta sn.
\]
\end{corollary}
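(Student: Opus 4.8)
The plan is to apply Lemma \ref{LEM:SMALL-BETA-STRONG} twice — once with $L = C$ (so $m = r$) and once with $L = V$ (so $m = n$) — and then subtract. Writing out the two expressions, the leading $k$ terms cancel, and we are left with
\[
\E\big(|G^\beta(\S_C)|\big) - \E\big(|G^\beta(\S_V)|\big)
= k\beta\Big[\Big(\overline{\deg}(C) - \tfrac{k-1}{r-1}\tfrac{1}{r}e(C,C)\Big) - \Big(\overline{\deg}(V) - \tfrac{k-1}{n-1}\tfrac{1}{n}e(V,V)\Big)\Big] + (R_C - R_V),
\]
where $|R_C|, |R_V| \le \overline{\deg^2}(V)\beta^2 n$, so $|R_C - R_V| \le 2\overline{\deg^2}(V)\beta^2 n$.

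Next I would massage the bracketed main term so that it connects to hypothesis \eqref{EQ:AVE-DEG}. Using $e(V,V) = \tfrac12\sum_{v\in V}\deg(v) = \tfrac{n}{2}\overline{\deg}(V)$ and similarly for the induced count $e(C,C)$ — here one must be slightly careful, since $e(C,C)$ counts edges inside $C$ and $\sum_{v\in C}\deg(v)$ counts all edges incident to $C$ with a factor for internal ones; the cleanest route is to note $\tfrac{1}{r}e(C,C) \le \tfrac12\overline{\deg}(C)$ and $\tfrac{1}{n}e(V,V) = \tfrac12\overline{\deg}(V)$ — the bracket becomes at least
\[
\Big(1 - \tfrac{k-1}{r-1}\Big)\overline{\deg}(C) \cdot (\text{something}) - \ldots
\]
so that after collecting the $\tfrac{r-k}{r-1}$ and $\tfrac{n-k}{n-1}$ coefficients the bracket is $\ge c_1$ by \eqref{EQ:AVE-DEG}, up to the harmless over/undercounting which goes the right way. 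Thus the main term is at least $k\beta c_1 = c_1\beta s n$ (recalling $k = sn$).

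Finally I would absorb the error: we need $2\overline{\deg^2}(V)\beta^2 n \le \tfrac12 c_1 \beta s n$, i.e. $\beta \le \tfrac14 \tfrac{c_1}{\overline{\deg^2}(V)} s$, which is exactly the hypothesis \eqref{EQ:SMALL-BETA-CHOICE}. Subtracting, the difference is at least $c_1\beta s n - \tfrac12 c_1 \beta s n = \tfrac12 c_1 \beta s n$, as claimed. The only genuinely delicate point is the bookkeeping in the previous paragraph: making sure that the difference of the two "$\overline{\deg} - \tfrac{k-1}{m-1}\tfrac{e}{m}$" quantities really does dominate $\tfrac{r-k}{r-1}\overline{\deg}(C) - \tfrac{n-k}{n-1}\overline{\deg}(V)$ rather than falling short, i.e. checking that the inequality $\tfrac1r e(C,C)\le \tfrac12\overline{\deg}(C)$ is used with the correct sign so the subtracted term for $C$ is made larger (hurting us) while for $V$ it is exact. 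If that sign works against us, one instead keeps $e(C,C)$ as is and notes $\tfrac1r e(C,C) \le \tfrac12 \overline{\deg}(C)$ still gives $\overline{\deg}(C) - \tfrac{k-1}{r-1}\tfrac1r e(C,C) \ge \overline{\deg}(C)(1 - \tfrac12\tfrac{k-1}{r-1}) \ge \tfrac{r-k}{r-1}\overline{\deg}(C) \cdot(\ge 1)$ — routine once one is careful — so this is an obstacle of vigilance rather than of ideas.
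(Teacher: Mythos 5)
Your proposal has the same skeleton as the paper's proof: apply Lemma \ref{LEM:SMALL-BETA-STRONG} with $L=C$ and $L=V$, subtract, bound the main term below by $c_1 k\beta$ using \eqref{EQ:AVE-DEG}, and absorb $|R_C-R_V|\le 2\,\overline{\deg^2}(V)\beta^2 n\le \tfrac12 c_1\beta sn$ via \eqref{EQ:SMALL-BETA-CHOICE}. That last step, and the overall bookkeeping $c_1 k\beta-\tfrac12 c_1 k\beta=\tfrac12 c_1\beta sn$, are exactly right.

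The point you flag as ``an obstacle of vigilance'' is, however, the one place where your sketch as written would fail, and it is worth being precise about it. In this paper $e(K,L)$ counts \emph{ordered} adjacent pairs: this is forced by the proof of Lemma \ref{l:V_minus_S}, where $\E(e(\S,V))=\tfrac{k}{m}e(L,V)=k\,\overline{\deg}(L)$ requires $e(L,V)=\sum_{v\in L}\deg(v)$, and likewise $e(\S,V\setminus\S)=e(\S,V)-e(\S,\S)$ requires $e(\S,\S)$ to count each internal edge twice. Hence $e(V,V)=\sum_{v\in V}\deg(v)=n\,\overline{\deg}(V)$, not $\tfrac{n}{2}\overline{\deg}(V)$, and $e(C,C)=\sum_{v\in C}\deg_{G[C]}(v)\le r\,\overline{\deg}(C)$. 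The factor of $2$ is not cosmetic here, because the two brackets need inequalities in \emph{opposite} directions. For the $C$-bracket (which enters with a $+$ sign) you need an upper bound on $\tfrac1r e(C,C)$, so any valid upper bound, halved or not, would do; but for the $V$-bracket (which enters with a $-$ sign) you need the \emph{lower} bound $\tfrac1n e(V,V)\ge\overline{\deg}(V)$, which holds only because it is an exact identity under the double-counting convention. With your value $\tfrac1n e(V,V)=\tfrac12\overline{\deg}(V)$ the subtracted term would be $\bigl(1-\tfrac{k-1}{2(n-1)}\bigr)\overline{\deg}(V)>\tfrac{n-k}{n-1}\overline{\deg}(V)$, and the main term could no longer be bounded below by the left-hand side of \eqref{EQ:AVE-DEG}. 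With the correct convention the $C$-bracket is $\ge\tfrac{r-k}{r-1}\overline{\deg}(C)$ and the $V$-bracket is exactly $\tfrac{n-k}{n-1}\overline{\deg}(V)$, after which your argument coincides with the paper's.
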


\begin{remark}
Similarly to part (a) of Theorem \ref{THM:CENTRAL-EXTREME} it is enough to
ensure that $\overline{\deg}(C)>\overline{\deg}(V)$ uniformly for
\eqref{EQ:AVE-DEG} when $s$ is small enough.
\end{remark}

Ensuring the large $\beta$ case for strong switchover is more involved and
requires further assumptions regarding the graph $G$. More precisely, we assume
edge expansion of the central region instead of large average degree.

\begin{definition}
We say that a graph $G=(V,E)$ has {\it edge-expansion} $(a,q)$ with some $a>0$
and $0<q<\frac{1}{2}$, if for every set $X\subset V$, $qn <|X|\le n/2$, the
number of edges between $X$ and $V\setminus X$ is at least $a|X|$.
\end{definition}

\begin{remark}
Note that the parameters $a,q$ might not be optimal. If $a_1 \leq a_2, q_1 \leq q_2$ and $G$ has edge-expansion $(a_2,q_1)$, than it is also true that $G$ has edge-expansion $(a_1,q_2)$.
\end{remark}

The following lemma shows that if the central region has a large enough
expansion, the epidemic will produce more infections from a uniform seeding
when $\beta$ is close to $1$.

\begin{lemma}	\label{LEM:LARGE-BETA-STRONG}
Let $b$ be the average degree of nodes of $V\setminus C$ in $G$, and assume
$G_1$ has edge expansion $(a,q)$ with $ q<1/3.$ Then
\begin{align}
\label{eq:lemma_main}
\begin{split}
&\E(|G^\beta(\S_V)|)-\E(|G^\beta(\S_C)|) \geq \\
& s(1-c)(1-\beta)^b n - \frac{c}{c-s} qn - \left( 1+\frac{c}{c-s}\right)n\rho^r-ne^{-2c k/3},
\end{split}
\end{align}
where $\rho:=\left(\frac{e(1-\beta)^a}{q} \right)^q.$

\end{lemma}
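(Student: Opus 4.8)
\medskip\noindent\textbf{Proof plan.}
The plan is to condition on the percolated graph $G^\beta$, exploit its component decomposition, and average over the seed afterwards. Writing $K$ for a generic component of $G^\beta$ and setting
\[
\Delta(K):=\P_{\S_V}(\S_V\cap K\neq\emptyset)-\P_{\S_C}(\S_C\cap K\neq\emptyset)=\frac{\binom{r-|K\cap C|}{k}}{\binom{r}{k}}-\frac{\binom{n-|K|}{k}}{\binom{n}{k}},
\]
we have $\E(|G^\beta(\S_V)|)-\E(|G^\beta(\S_C)|)=\E_\beta\big[\sum_K|K|\,\Delta(K)\big]$. I split the components into \emph{type A} ($K\cap C=\emptyset$) and \emph{type B} (all others). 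Type-A components have $\Delta(K)=\P_{\S_V}(\S_V\cap K\neq\emptyset)\ge 0$; among them the vertices of $V\setminus C$ that are isolated in $G^\beta$ form components of size $1$ with $\Delta(K)=k/n=s$, and since $\P_\beta(v\text{ isolated})=(1-\beta)^{\deg(v)}$, convexity of $t\mapsto(1-\beta)^t$ (Jensen over $V\setminus C$) makes the type-A contribution at least $s(1-c)(1-\beta)^b n$, the leading term. It thus remains to bound the type-B contribution from below by $-\tfrac{c}{c-s}qn-\big(1+\tfrac{c}{c-s}\big)n\rho^r-ne^{-2ck/3}$.

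Next I would introduce the event $\mathcal G$ that $G_1^\beta$ has a component $Q$ with $|C\setminus Q|\le qr$. This is where $q<1/3$ and the edge-expansion of $G_1$ are used: if $\mathcal G$ fails, one can always produce a set $X\subseteq C$ that is a union of $G_1^\beta$-components with $qr<|X|\le r/2$ — if the largest component already exceeds $r/2$, take $X=C\setminus(\text{that component})$; otherwise grow $X$ by adding components until it first exceeds $r/3$, and pass to its complement in $C$ if it overshoots $r/2$ (here $q<1/3$ ensures $qr<r/3\le|X|$). Such an $X$ has empty boundary in $G_1^\beta$ but at least $a|X|$ boundary edges in $G_1$, all of which were deleted; a union bound over $|X|=t>qr$ using $\binom{r}{t}\le(e/q)^t$ and summing the resulting geometric series (valid once $\rho<1$, i.e. $e(1-\beta)^a/q<1$) gives $\P(\mathcal G^c)=O(\rho^r)$.

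On $\mathcal G$ there is exactly one type-B component, $K^\star$, containing $Q$ (because $Q$ is connected in $G^\beta$), and every other type-B component $K$ has $K\cap C\subseteq C\setminus Q$, so $\sum_{K\neq K^\star}|K\cap C|\le|C\setminus Q|\le qr$. For $K^\star$ I use $|K^\star|\le n$ and $\Delta(K^\star)\ge\P_{\S_V}(\S_V\cap Q\neq\emptyset)-1\ge -e^{-2ck/3}$, the last step because $|Q|\ge(1-q)r>\tfrac{2}{3}cn$ forces $\P_{\S_V}(\S_V\cap Q=\emptyset)\le(1-\tfrac{2}{3}c)^k\le e^{-2ck/3}$; this produces the $-ne^{-2ck/3}$ term. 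For the remaining type-B components the crucial observation is that $-\Delta(K)=\binom{n-|K|}{k}/\binom{n}{k}-\binom{r-|K\cap C|}{k}/\binom{r}{k}$ can be positive only when $|K|<\tfrac{n-k}{r-k}|K\cap C|$: comparing $\prod_{i=0}^{k-1}\tfrac{n-|K|-i}{n-i}$ with $\prod_{i=0}^{k-1}\tfrac{r-|K\cap C|-i}{r-i}$ factor by factor, $|K|\ge\tfrac{n-k}{r-k}|K\cap C|$ makes each factor of the first at most the corresponding one of the second (the degenerate case $|K\cap C|>r-k$, where the second binomial vanishes, forces $|K|>(c-s)n$ and so contributes only exponentially little, absorbed into the gap between $\tfrac{(1-s)c}{c-s}$ and $\tfrac{c}{c-s}$). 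Since $-\Delta(K)\le 1$, the components with $\Delta(K)<0$ therefore contribute at least $-\sum_{K\neq K^\star:\,\Delta(K)<0}|K|\ge-\tfrac{n-k}{r-k}\sum_{K\neq K^\star}|K\cap C|\ge-\tfrac{n-k}{r-k}qr\ge-\tfrac{c}{c-s}qn$, while those with $\Delta(K)\ge 0$ only help. Finally, on $\mathcal G^c$ the type-B sum is trivially at least $-n$, contributing $-n\,\P(\mathcal G^c)=O(n\rho^r)$; collecting the $\rho^r$-errors yields the $\big(1+\tfrac{c}{c-s}\big)n\rho^r$ term and completes the bound.

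I expect the main obstacle to be precisely this last step. A naive bound such as $|K|\le n$ on the ``small'' type-B components contributes an error of order $n^2$ and is useless, because a component with a tiny $C$-part could in principle swallow a macroscopic periphery cluster. What rescues the argument is the coupling $|K|\lesssim\tfrac{n-k}{r-k}|K\cap C|$, valid for every component that $\S_C$ hits more readily than $\S_V$: it forces the \emph{size} of an offending component to be comparable to its much smaller $C$-content, so they occupy only $O\big(\tfrac{c}{c-s}qr\big)$ vertices in total. Getting this comparison of the two hypergeometric factors right (including the degenerate cases), and arranging $\mathcal G$ so that the edge-expansion of $G_1$ genuinely forces a single giant $G_1^\beta$-component, are the delicate points; the Jensen estimate for the leading term and the expansion union bound are routine.
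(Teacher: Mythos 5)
Your proposal is correct and follows essentially the same route as the paper's proof: condition on the percolation, decompose over components, extract the leading term from the isolated vertices of $V\setminus C$ via Jensen, use the edge expansion of $G_1$ to force a near-spanning component of $C$ (the paper's Lemma~\ref{LEM:EXPAND} together with Claim~\ref{CLAIM:LARGE-COMP}), and control the components that favour central seeding by the observation that $\Delta(K)<0$ forces $|K|\lesssim\frac{1}{c-s}|K\cap C|$, which is exactly the paper's $K_3$/$K_4$ dichotomy. The one spot where your write-up falls short of the stated constants is the union bound for $\P(\mathcal{G}^c)$: the crude geometric series gives $\rho^r\big/\big(1-e(1-\beta)^a/q\big)$, whose prefactor can exceed the available budget $1+\frac{c}{c-s}$ when $e(1-\beta)^a$ is close to $q$, so you should replace it by the Chernoff--Hoeffding estimate used in the paper, which yields $\P(\mathcal{G}^c)\le\rho^r$ with no extra constant.
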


\begin{remark}
\label{r:large_beta}

When $G_1$ has edge expansion $(a,q)$ with $a>b$ and $q=(1+\varepsilon)e(1-\beta)^a$ for some $\varepsilon>0$ we end up with $0 \leq \rho <1$ resulting in
\begin{align*}
\left( 1+\frac{c}{c-s}\right)n\rho^r+ne^{-2c k/3}=o(n)
\end{align*}
for all fixed $0<\beta<1$. Furthermore, as $0 \leq b<a$ it is possible to set $0<\beta<1$ to a value for which $q<\frac{1}{3}$ and
\begin{align*}
s(1-c)(1-\beta)^b  >  \frac{c}{c-s}(1+\varepsilon) e(1-\beta)^a =\frac{c}{c-s} q,
\end{align*}
thus, there is a $\delta>0$ such that $\E(|G^\beta(\S_V)|)>\E(|G^\beta(\S_C)|)+\delta n$ for large enough $n$.
\end{remark}

Our main result concerning strong switchover will easily follow by a
combination of Lemmas \ref{LEM:SMALL-BETA-STRONG} and
\ref{LEM:LARGE-BETA-STRONG}.

\begin{theorem}\label{t:main}
Let $(G_m:~m=1,2,\dots)$ be a sequence of such
that $n_m=|V(G_m)|\to\infty$. Let $C_m\subseteq V(G_m)$ so that $|C_m|=c_mn_m$ and let $b_m$ denote the
average degree in $G_m$ of nodes in $V(G_m)\setminus C_m$ with some uniform bound $b_{m} \leq b_{max}$.

Assume there is a $\varepsilon>0$ such that :
\begin{itemize}
	\item $ c_m \leq1-\varepsilon$,
	\item $\eps \leq s_m \leq (1-c_m)c_m/2$,
	\item For any $0<q<\frac{1}{3}$ $G_m[C_m]$ has edge-expansion $(b_{max}+\varepsilon,q)$ when $m$ is large enough.
\end{itemize}
Also, assume  the second moments of the degrees are uniformly bounded.

Then the graph sequence $((G_m,C_m):~m=1,2,\dots)$ exhibits the strong
switchover phenomenon for seed sizes $s_mn_m$.
\end{theorem}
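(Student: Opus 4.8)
The plan is to produce the value $\beta_1$ close to $1$ from Lemma~\ref{LEM:LARGE-BETA-STRONG} (in the quantitative form of Remark~\ref{r:large_beta}) and the value $\beta_2$ close to $0$ from Corollary~\ref{COR:SMALL-BETA-MAIN}, and then to check that both inequalities of Definition~\ref{def:strong_switchover} hold with a single macroscopic $\delta$, uniformly along the sequence. I would first record the elementary uniform consequences of the hypotheses: $c_m\in[2\eps,1-\eps]$, $c_m-s_m\ge c_m/2$ (hence $c_m/(c_m-s_m)\le 2$), $s_m(1-c_m)\ge\eps^2$, $k_m:=s_mn_m\to\infty$ with $2\le k_m<r_m:=c_mn_m$ for $m$ large, and $\overline{\deg^2}(V(G_m))\le M$ for a constant $M$.

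The one step that is not bookkeeping is converting the expansion hypothesis on $G_m[C_m]$ into the average-degree inequality \eqref{EQ:AVE-DEG}, and I would do this by averaging over balanced cuts of $C_m$. Fix, say, $q=1/4$; for $m$ large $G_m[C_m]$ has edge-expansion $(b_{max}+\eps,q)$, so every $X\subseteq C_m$ with $|X|=\lfloor r_m/2\rfloor$ has at least $(b_{max}+\eps)\lfloor r_m/2\rfloor$ edges of $G_m[C_m]$ into $C_m\setminus X$. Taking such an $X$ uniformly at random, each edge of $G_m[C_m]$ is cut with probability at most $r_m/(2(r_m-1))$, so
\[
e(G_m[C_m])\cdot\frac{r_m}{2(r_m-1)}\ \ge\ \E\bigl[e(X,C_m\setminus X)\bigr]\ \ge\ (b_{max}+\eps)\Bigl\lfloor\tfrac{r_m}{2}\Bigr\rfloor,
\]
which rearranges to $\overline{\deg}(C_m)\ge 2e(G_m[C_m])/r_m\ge 2(b_{max}+\eps)(1-o(1))$, since a vertex of $C_m$ has at least its $G_m[C_m]$-degree in $G_m$. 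Hence $\overline{\deg}(C_m)\ge 2b_{max}+\eps\ge 2b_m+\eps$ for $m$ large. The point here is that the expansion hypothesis, which a priori only constrains cuts, in fact forces $\overline{\deg}(C_m)$ to exceed $2b_m$, and this is precisely the slack \eqref{EQ:AVE-DEG} needs even when $s_m$ is as large as $(1-c_m)c_m/2$.

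For the small-$\beta$ side I substitute $\overline{\deg}(V(G_m))=c_m\overline{\deg}(C_m)+(1-c_m)b_m$ into the left-hand side of \eqref{EQ:AVE-DEG}. An elementary estimate (using $s_m\le(1-c_m)c_m/2$) gives $\tfrac{r_m-k_m}{r_m-1}-\tfrac{n_m-k_m}{n_m-1}c_m\ge\tfrac{1-c_m}{2}$, together with $\tfrac{n_m-k_m}{n_m-1}\le1$, so the left side of \eqref{EQ:AVE-DEG} is at least $(1-c_m)\bigl(\tfrac12\overline{\deg}(C_m)-b_m\bigr)\ge(1-c_m)\tfrac\eps2\ge\tfrac{\eps^2}{2}=:c_1$. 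Corollary~\ref{COR:SMALL-BETA-MAIN} with $\beta_2:=\tfrac14 c_1 s_m/\overline{\deg^2}(V(G_m))\in(0,1)$ then yields $\E(|G^{\beta_2}(\S_{m,C})|)-\E(|G^{\beta_2}(\S_{m,V})|)\ge\tfrac12 c_1\beta_2 s_m n_m=\tfrac{c_1^2 s_m^2}{8\,\overline{\deg^2}(V(G_m))}n_m\ge\tfrac{\eps^6}{32M}n_m$.

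For the large-$\beta$ side I apply Lemma~\ref{LEM:LARGE-BETA-STRONG} with $a=b_{max}+\eps$ and $q=2e(1-\beta_1)^a$ (Remark~\ref{r:large_beta} with its parameter equal to $1$), fixing the constant $\beta_1<1$ close enough to $1$ that $q<1/3$ and $(1-\beta_1)^{\eps}\le\eps^2/(8e)$. Since $a-b_m\ge\eps$ and $b_m\le b_{max}$, the leading term $s_m(1-c_m)(1-\beta_1)^{b_m}n_m-\tfrac{c_m}{c_m-s_m}q\,n_m$ is at least $(1-\beta_1)^{b_m}\bigl(\eps^2-4e(1-\beta_1)^\eps\bigr)n_m\ge(1-\beta_1)^{b_{max}}\tfrac{\eps^2}{2}n_m$, while $\rho=(1/2)^q<1$ is a fixed constant and, because $r_m\ge 2\eps n_m$ and $c_mk_m\ge 2\eps^2 n_m$, the terms $\bigl(1+\tfrac{c_m}{c_m-s_m}\bigr)n_m\rho^{r_m}+n_me^{-2c_mk_m/3}$ are $o(n_m)$. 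Hence $\E(|G^{\beta_1}(\S_{m,V})|)-\E(|G^{\beta_1}(\S_{m,C})|)\ge\tfrac14(1-\beta_1)^{b_{max}}\eps^2\,n_m$ for $m$ large, and taking $\delta$ to be the minimum of the two constants obtained completes the proof. The balanced-cut estimate of the second paragraph is the only part I expect to need genuine care; everything else is a matter of propagating the uniform bounds through Lemma~\ref{LEM:LARGE-BETA-STRONG} and Corollary~\ref{COR:SMALL-BETA-MAIN}.
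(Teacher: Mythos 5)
Your proposal is correct and follows essentially the same route as the paper: the balanced-cut double-counting you use to extract $\overline{\deg}(C_m)\ge 2(b_{max}+\eps)(1-o(1))$ from the expansion hypothesis is exactly the paper's Claim \ref{c:degrees}, the small-$\beta$ side then feeds into Corollary \ref{COR:SMALL-BETA-MAIN} just as in the paper, and the large-$\beta$ side is Remark \ref{r:large_beta} made quantitative. Your write-up is in fact somewhat more explicit than the paper's (which leaves the large-$\beta$ constants and the uniformity of $\delta$ implicit), but there is no substantive difference in approach.
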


\section{Proofs}

We start with a simple identity, which will imply that when choosing a "small"
random seed $\S$ compared to $V$, it becomes unlikely that two vertices in $\S$
are adjacent to each other, and hence
\[\E \left(e(\S,V  \setminus \S)\right)\approx \E \left(e(\S,V) \right)=k \overline{\deg}(L).
\]
\begin{lemma}\label{l:V_minus_S}
	Let $\S$ be a random $k$-element subset of $L$. Then
	\[
	\E_\S \left( e(\S,V \setminus \S)\right)
	= k \Big(\overline{\deg}(L)-\frac{k-1}{m-1} \frac{1}{m}e(L,L)\Big).
	\]
\end{lemma}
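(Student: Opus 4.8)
The plan is to reduce the statement to a one-line expected-value computation over the uniform random $k$-subset $\S$ of $L$, using only the elementary inclusion probabilities for a hypergeometric sample: $\P_\S(v\in\S)=k/m$ for each $v\in L$, and $\P_\S(\{u,v\}\subseteq\S)=k(k-1)/(m(m-1))$ for each pair $u,v\in L$. Intuitively this is the formalisation of the heuristic stated just before the lemma, that $\E_\S\big(e(\S,V\setminus\S)\big)$ is $k\,\overline{\deg}(L)$ minus a correction for the (rare) internal edges of $\S$.

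First I would invoke the deterministic identity, valid for every $\S\subseteq V$,
\[
\sum_{v\in\S}\deg_G(v)\;=\;e(\S,V\setminus\S)+2\,e\big(G[\S]\big),
\]
where $e(G[\S])$ is the number of edges of $G$ with both endpoints in $\S$ (each edge leaving $\S$ is counted once from its $\S$-endpoint, each edge inside $\S$ twice), hence $e(\S,V\setminus\S)=\sum_{v\in\S}\deg_G(v)-2\,e(G[\S])$. Then I take $\E_\S$ of both sides and apply linearity. Since $\S\subseteq L$, the first term gives $\sum_{v\in L}\deg_G(v)\cdot\frac{k}{m}=k\,\overline{\deg}(L)$, using $\sum_{v\in L}\deg_G(v)=m\,\overline{\deg}(L)$. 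For the second term, an edge of $G$ can lie inside $\S$ only if both endpoints already lie in $L$, so $\E_\S\big(e(G[\S])\big)=\sum_{\{u,v\}\in E(G[L])}\P_\S(\{u,v\}\subseteq\S)=e(G[L])\cdot\frac{k(k-1)}{m(m-1)}$.

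Combining the two, and recalling that in the paper's convention $e(L,L)=2\,e(G[L])=\sum_{v\in L}\deg_{G[L]}(v)$ counts each internal edge of $L$ twice, I obtain
\[
\E_\S\big(e(\S,V\setminus\S)\big)=k\,\overline{\deg}(L)-e(L,L)\cdot\frac{k(k-1)}{m(m-1)}=k\Big(\overline{\deg}(L)-\frac{k-1}{m-1}\,\frac{1}{m}\,e(L,L)\Big),
\]
which is exactly the claim. There is no genuine obstacle: the only point that needs care is the factor of two — keeping straight that $e(L,L)$ in the statement denotes $2\,e(G[L])$ rather than the number of internal edges — since this is what makes the constant in the correction term come out right and lets the identity feed cleanly into \eqref{eq:small_beta_original} for the proof of Lemma \ref{LEM:SMALL-BETA-STRONG}.
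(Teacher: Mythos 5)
Your proof is correct and follows essentially the same route as the paper: both decompose $e(\S,V\setminus\S)$ as $e(\S,V)-e(\S,\S)$ (your $\sum_{v\in\S}\deg(v)-2e(G[\S])$ is the same identity in different notation) and evaluate the two expectations via the inclusion probabilities $k/m$ and $k(k-1)/(m(m-1))$. Your explicit remark about the factor-of-two convention in $e(L,L)$ is a helpful clarification that the paper leaves implicit.
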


The last (error) term can be estimated as
\[
\frac{k-1}{m-1} \frac{1}{m}e(L,L) < \frac{k}{m} \overline{\deg}(L).
\]

\begin{proof}
	\begin{align*}
		&\E \left(e(\S, V \setminus \S) \right)=\E \left(e(\S, V ) \right)-\E \left(e(\S,  \S) \right)=\\
		&\frac{k}{m}e(L,V)-\frac{k(k-1)}{m(m-1)}e(L,L)=k \Big(\overline{\deg}(L)-\frac{k-1}{m-1} \frac{1}{m}e(L,L)\Big)
	\end{align*}
	
	This proves the lemma.
\end{proof}

\subsection{Weak switchover}

\begin{proof*}{Theorem \ref{COR:CENTRAL-EXTREME}}
	We start with the small $\beta$ case. 	
	\begin{align*}
		&\E\left( \left |G^{\beta}(\S_C) \right|\right)-\E\left( \left
		|G^{\beta}(\S_V) \right|\right)=\\
		& \left(\E\left(e\left(\S_C,V \setminus \S_C \right) \right)
        -\E\left(e\left(S_V, V\setminus \S_v \right)\right) \right)\beta	+O\left(\beta^2\right)
	\end{align*} 	
	Due to Lemma \ref{l:V_minus_S} the leading term can be bounded as
	\begin{align*}
		&\E\left(e\left(\S_C,V \setminus \S_C \right) \right)-\E\left(e\left(\S_V,V \setminus \S_V \right) \right)=\\
		& k \left(\overline{\deg}(C)-\frac{k-1}{r-1}
		\frac{1}{r}e(C,C)-\overline{\deg}(V)+\frac{k-1}{n-1}\frac{1}{n}e(V,V) \right)  \geq \\
		& k \left(\frac{r-k}{r-1} \overline{\deg}(C)-\frac{n-k}{n-1}\overline{\deg}(V) \right)>0,
	\end{align*}
	making 	$\E\left( \left |G^{\beta}(\S_V) \right|\right)>\E\left( \left
	|G^{\beta}(\S_C) \right|\right)$ for sufficiently small $\beta.$ 	
	
	As for small $\gamma=1-\beta$
	\begin{align*}
		&\E \left(\left|G^{\beta}(\S_V) \right| \right)-\E \left(\left|G^{\beta}(\S_C) \right| \right)
		=\\
		&\left(\E \left(|Y \setminus \S_C| \right)-\E \left(|Y \setminus \S_V |\right) \right)\gamma^{d}
        +O \left(\gamma^{d+1} \right)=\\
		&k\left[ \left(1-\frac{|Y \cup C|}{|C|}\right)-\left(1-\frac{|Y|}{|V|} \right) \right] \gamma^{d}
        +O \left(\gamma^{d+1} \right) =\\
		&k \left(\underbrace{\frac{|Y|}{|V|}-\frac{|Y \cup C|}{|C|}}_{>0} \right)\gamma^{d}+O \left(\gamma^{d+1} \right),
	\end{align*}
	implying $\E \left(\left|G^{\beta}(\S_V) \right| \right)>\E
	\left(\left|G^{\beta}(\S_C) \right| \right) $ when $\beta$ is close to $1$.
\end{proof*}

\subsection{Strong switchover}

\subsubsection{Lemmas for small $\beta$}
We are going to prove Lemma \ref{LEM:SMALL-BETA-STRONG} in the following
slightly stronger form:

\begin{lemma}\label{LEM:SMALL-BETA-STRONG-X}
Let
$L\subseteq V$, $m=|L|$, and let $\S$ be a random $k$-subset of $L$. Then
\begin{align*}
\E \big( |G^{\beta}(\S)| \big) = k + k \Big(\overline{\deg}(L)-\frac{k-1}{m-1} \frac{1}{m}e(L,L)\Big)\beta + R ,
\end{align*}
where
\[
 - \frac{1}{2}\left(\overline{\deg^2}(V)-\overline{\deg}(V) \right)\beta^2 n
 \le R \le \left(\overline{\deg^2}(V)-\overline{\deg}(V) \right)\beta^2 n.
\]
\end{lemma}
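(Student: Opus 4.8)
The plan is to establish the identity first with the seed $S\subseteq L$ held \emph{fixed}, averaging only over the percolation, and to pass to the random seed $\S$ only at the very end, where Lemma~\ref{l:V_minus_S} supplies the coefficient of $\beta$. Throughout write $T:=\overline{\deg^2}(V)-\overline{\deg}(V)$ and, for $v\in V\setminus S$, set $e(v,S):=|\{s\in S:s\sim v\}|\le\deg(v)$.

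\textbf{Splitting off the seeds and their neighbours.} For fixed $S$ I would write $|G^\beta(S)|=k+\sum_{v\in V\setminus S}\one_{\{v\in G^\beta(S)\}}$ and let $A_v$ be the event that at least one edge of $G$ joining $v$ to $S$ survives in $G^\beta$; since $A_v\subseteq\{v\in G^\beta(S)\}$, the difference $\one_{\{v\in G^\beta(S)\}}-\one_{A_v}$ is again a $\{0,1\}$-valued indicator. Taking $\E_\beta$ gives $\P_\beta(A_v)=1-(1-\beta)^{e(v,S)}$, so
\[
\sum_{v\in V\setminus S}\P_\beta(A_v)=\beta\,e(S,V\setminus S)-\psi_1(S),\qquad
\psi_1(S):=\sum_{v\in V\setminus S}\big[(1-\beta)^{e(v,S)}-1+e(v,S)\beta\big].
\]
Each summand of $\psi_1(S)$ is $\ge 0$ by Bernoulli's inequality, and is $\le\binom{e(v,S)}{2}\beta^2\le\binom{\deg v}{2}\beta^2$ by the elementary bound $(1-x)^j\le 1-jx+\binom{j}{2}x^2$ valid for $0\le x\le 1$ and every integer $j\ge 0$ (differentiate twice in $x$). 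Hence $0\le\psi_1(S)\le\tfrac12\beta^2\sum_v(\deg^2 v-\deg v)=\tfrac12 T\beta^2 n$.

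\textbf{The length-$\ge 2$ contribution.} Set $\psi_2(S):=\sum_{v\in V\setminus S}\big(\P_\beta(v\in G^\beta(S))-\P_\beta(A_v)\big)\ge 0$, the contribution of vertices in a seeded component that have no surviving edge directly into $S$. The key observation is that for such a $v$, on a shortest $G^\beta$-path from $v$ to $S$ the second vertex $u$ satisfies $u\notin S$, the edge $vu$ survives, and $u$ is still connected to $S$ in $G^\beta-v$ (the whole vertex $v$ deleted). Now $\{vu\text{ survives}\}$ depends only on the single edge $vu$, whereas $\{u\leftrightarrow S\text{ in }G^\beta-v\}$ depends only on edges \emph{not} incident to $v$; hence these are independent, and, since $u$ has degree $\deg(u)-1$ in $G-v$ and $u\notin S$,
\[
\P_\beta\big(u\leftrightarrow S\text{ in }G^\beta-v\big)\le 1-(1-\beta)^{\deg(u)-1}\le(\deg(u)-1)\beta .
\]
A union bound over the eligible neighbours $u$ of $v$ then yields $\P_\beta(v\in G^\beta(S))-\P_\beta(A_v)\le\beta^2\sum_{u\sim v,\,u\notin S}(\deg(u)-1)$, and exchanging the order of summation,
\[
\psi_2(S)\le\beta^2\sum_{u\in V\setminus S}\deg(u)\big(\deg(u)-1\big)\le\beta^2\sum_{v\in V}(\deg^2 v-\deg v)=T\beta^2 n .
\]

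\textbf{Conclusion.} Combining the two steps, for every $k$-subset $S$ of $L$,
\[
\E_\beta\big(|G^\beta(S)|\big)=k+\beta\,e(S,V\setminus S)+\big(\psi_2(S)-\psi_1(S)\big),\qquad
-\tfrac12 T\beta^2 n\le\psi_2(S)-\psi_1(S)\le T\beta^2 n .
\]
Averaging over $\S\sim\operatorname{Uni}(L,k)$ and using Lemma~\ref{l:V_minus_S} to replace $\E_\S e(\S,V\setminus\S)$ by $k\big(\overline{\deg}(L)-\tfrac{k-1}{m-1}\tfrac1m e(L,L)\big)$ gives the asserted formula, with $R=\E_\S(\psi_2(\S)-\psi_1(\S))$ inheriting the same two-sided bound. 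I expect the only delicate point to be the independence in the middle step: one must delete the \emph{vertex} $v$, not merely the edge $vu$, both so that the two events factorise and so that one gains the $-1$ in $\deg(u)-1$ — and that $-1$ is exactly what turns $\overline{\deg^2}(V)$ into $\overline{\deg^2}(V)-\overline{\deg}(V)$ in the error term (only the coarser bound $\overline{\deg^2}(V)\beta^2 n$ of Lemma~\ref{LEM:SMALL-BETA-STRONG} would survive if one deleted just the edge).
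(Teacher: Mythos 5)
Your proof is correct. The overall skeleton matches the paper's: both arguments split $|G^\beta(S)|$ into the $k$ seeds, the vertices reached by a single surviving edge into $S$ (which produces the term $\beta\, e(S,V\setminus S)$, converted into the stated coefficient of $\beta$ via Lemma~\ref{l:V_minus_S}), and a ``length $\ge 2$'' remainder bounded by $\beta^2\sum_{u}\deg(u)(\deg(u)-1)$; your lower-bound error $\psi_1\le \tfrac12 T\beta^2 n$ is the same quantity as the paper's, except that you bound it deterministically for each fixed $S$ using $e(v,S)\le\deg(v)$, whereas the paper bounds it in expectation over $\S$ via $\E\binom{\deg_\S(v)}{2}=\frac{k(k-1)}{m(m-1)}\binom{\deg_L(v)}{2}$ --- both suffice. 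Where you genuinely diverge is the mechanism for the length-$\ge 2$ term: the paper introduces ``proper'' (non-triangle) length-two paths, the event $\mathcal{A}_v$ that no such path at $v$ survives, and a product formula resting on a disjointness claim for these paths; you instead perform a first-step analysis of a shortest $G^\beta$-path from $v$ to $S$, identifying its second vertex $u\notin S$ and decoupling the event $\{vu \text{ survives}\}$ from $\{u\leftrightarrow S \text{ in } G^\beta-v\}$ because they are measurable with respect to disjoint edge sets. This is a clean and fully rigorous alternative that avoids the paper's somewhat delicate independence bookkeeping for proper paths, at the cost of being slightly less explicit about where the $\beta^2$ gain comes from combinatorially. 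One small quibble with your closing remark: deleting only the edge $vu$ (rather than the whole vertex $v$) would in fact also work --- $u$ already has degree $\deg(u)-1$ in $G\setminus\{vu\}$, the covering of the event $\{v\in G^\beta(S)\}\setminus A_v$ still holds, and independence is still available since the two events depend on disjoint edge sets --- so vertex deletion is a convenience, not the source of the $-\overline{\deg}(V)$ correction; this is only a side comment and does not affect the validity of your argument.
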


\begin{proof}
For ease of notation introduce $\deg_{S}(v):=e \left(\{v\},\S) \right)$
representing the number of neighbors of vertex $v \in V$ from $\S \sim
\operatorname{Uni}(L,k).$		For the lower bound on $R$, it suffices to
count nodes in $\S$ and their neighbors:
\begin{align}\label{EQ:SMALL-BETA-LOWER}
\E_\S(|G^{\beta}(\S)|) &\ge k + \sum_{v\in \mathcal{N}(\S)}\big(1-(1-\beta)^{\deg_\S(v)}\big)\nonumber\\
&\ge k + \sum_{v\in \mathcal{N}(\S)}\left(\beta\deg_\S(v)- \beta^2\binom{\deg_\S(v)}{2}\right)\nonumber\\
& =  k + \beta e(\S,V \setminus \S)  -\beta^2\sum_{v\in V}\binom{\deg_{\S}(v)}{2}.
\end{align}
Note that, by definition $\binom{\deg_{\S}(v)}{2}=0$ when $\deg_{\S}(v) \leq 1.$

The probability that the random set $\S$ contains two given nodes in L is $\frac{k(k-1)}{m(m-1)}$, hence
\begin{align*}
 &\beta^2\sum_{v\in V}\E \left[ \binom{\deg_{\S}(v)}{2} \right]
 = \frac{k(k-1)}{m(m-1)}\beta^2\sum_{v\in V}\binom{\deg_{L}(v)}{2} \leq \\
 &\frac{1}{2}\beta^2 \sum_{v \in V} \deg(v) \left(\deg(v)-1 \right)=\frac{1}{2}
 \left(\overline{\deg^2}(V) -\overline{\deg}(V)\right)\beta^2 n.
\end{align*}

For the upper bound notice

\begin{align*}
\E_{\S} \left(\left|G^{\beta}(\S) \right| \right)
= k+\sum_{v \in \mathcal{N}(\S)}\P_{\S}(v \in G^{\beta}(\S)) +\sum_{v \in V \setminus (\S \cup \mathcal{N}(\S))}
\P_{\S}(v \in G^{\beta}(\S)).
\end{align*}

Let $\deg_{\S}^{\beta}(v)$ denote number of neighbors of $v  \in V$ from $\S$
in the percolated graph $G^\beta$. Clearly, for $v \in \mathcal{N}(\S)$
\begin{align*}
  &\P_{\S}\left( v \in G^\beta(\S) \right)
  \leq \P_{\S} \left(\deg_{\S}^{\beta}(v)>0 \right)
  +\P_{\S} \left( \left. v \in G^\beta(\S) \right| \deg_{\S}^{\beta}(v)=0 \right)= \\
  &\underbrace{1-\left(1-\beta \right)^{\deg_{\S}(v)}}_{ \leq \beta \deg_{\S}(v) }+\P_{\S}
  \left( \left. v \in G^\beta(\S) \right| \deg_{\S}^{\beta}(v)=0 \right)
\end{align*}

Let $H$ denote the graph where the edges between $v$ and $\S$ are deleted.
Since edge retention happens independently
$$\P_{\S} \left( \left. v \in G^\beta(\S) \right| \deg_{\S}^{\beta}(v)=0 \right)=\P_{\S}(v \in H^{\beta}(\S)). $$

We will call a length $2$ path $vuw$ \emph{proper} if $v$ and $w$ has distance
two, or in other words, $v,u,w$ does not form a triangle. $\mathcal{A}_{v}(G)$
denotes the event that there is no proper path starting from $v$ in the
percolated graph $G^\beta.$ Since $H$ is a subgraph of $G$ $\mathcal{A}_{v}(G)
\subseteq \mathcal{A}_{v}(H).$

Observe that vertices $v \in V \setminus (\S \cup \mathcal{N}(\S))$ in graph
$G$ and $v \in \mathcal{N}(\S)$ in $H$ are at least $2$ steps away from the set
$\S$. This implies
\begin{align*}
v \in V \setminus (\S \cup \mathcal{N}(\S)) \ \ \P_{\S} \left(v \not \in G^\beta(\S) \right)  \geq & \P \left(\mathcal{A}_{v}(G) \right), \\
v \in \mathcal{N}(\S) \ \ \P_{\S}(v \not \in H^{\beta}(\S)) \geq & \P_{\S}(\mathcal{A}_{v}(H)) \geq \P \left(\mathcal{A}_{v}(G) \right).
\end{align*}
Together, they make bound
\begin{align*}
\E_{\S} \left( \left|G^{\beta}(\S) \right| \right)
\leq k+\beta e \left( \S, V \setminus \S \right)+\sum_{v \in V \setminus \S}
\left(1-\P \left(\mathcal{A}_{v}(G) \right) \right).
\end{align*}

Let $\delta(u)$ denote the number of $vuw$ proper paths for some $w$. Clearly,
$\delta(u) \leq \deg(u)-1$.

Note that two proper paths $vuw, \ vu'w'$ can only share an edge in their $vu,
\,vu'$ segment when $u=u'$, the second segment is always disjoint. ($w'=u, \,
u'=w$ would make $u,v,w$ a triangle.) This means any two proper paths are
independent when $u \neq u'$.

\begin{align*}
&\P\left( \bigcup_{w \sim u} \left \{vuw \ \textit{is a proper path in} \ G^{\beta}  \right \}\right)=\beta \left(1-(1-\beta)^{\delta(u)} \right)
\end{align*}
\begin{align*}
\P \left(\mathcal{A}_{v}(G)\right)=& \prod_{u \sim v} \left[1-\beta \left(1-(1-\beta)^{\delta(u)} \right) \right]  \overset{*}{\geq } 1 -\sum_{u \sim v} \beta \left(1-(1-\beta)^{\delta(u)} \right) \\
\geq &1-\beta^2 \sum_{u \sim v} \delta(u) \geq 1-\beta^2 \sum_{u \sim v} \left(\deg(u)-1 \right)
\end{align*}
\begin{align*}
\sum_{v \in V \setminus \S} \left(1-\P \left(\mathcal{A}_{v}(G) \right) \right) \leq& \beta^2 \sum_{v \in V} \sum_{u \sim v}(\deg(u)-1)=\beta^2 \sum_{u} \deg(u) \left( \deg(u)-1\right)\\
=&\left(\overline{\deg^2}(V)-\overline{\deg}(V)\right)\beta^2 n
\end{align*}

Note that at step $*$ we used the union bound for independent events.
\end{proof}

\begin{proof}(Corollary \ref{COR:SMALL-BETA-MAIN})

Let $R_C,R_V$ be the remainder terms in Lemma \ref{LEM:SMALL-BETA-STRONG} when $L=C,V$. Since $\frac{n-k}{n-1} \geq \frac{r-k}{r-1}, $\eqref{EQ:AVE-DEG} implies $\overline{\deg}(C) \geq \overline{\deg}(V).$ Thus,
\begin{align*}
|R_C|, |R_V| \leq \overline{\deg^2}(V)\beta^2 n
\end{align*}

This results in the bound
\begin{align*}
\E&\left (| G^\beta(\S_C)| \right)-\E\left (| G^\beta(\S_V)| \right)\\
&= \Big(\overline{\deg}(C)-\frac{k-1}{r-1}
\frac{1}{r}e(C,C)-\overline{\deg}(V)+\frac{k-1}{n-1}\frac{1}{n}e(V,V)\Big)\beta k +  R_C-R_V \\
&\ge \Big(\frac{r-k}{r-1} \overline{\deg}(C)-\frac{n-k}{n-1}\overline{\deg}(V)\Big)\beta k- 2 \overline{\deg^2}(V)\beta^2 n\\
&\geq  \frac{1}{2}c_1\beta k=\frac{1}{2}c_1\beta sn.
\end{align*}
\end{proof}

\subsubsection{Lemmas for large $\beta$}

\begin{lemma}\label{LEM:EXPAND}
Let $G$ be a graph with $n$ nodes and edge-expansion $(a,q)$, where $a>1$ and
$q<1/3$. Let $0<\beta<1$, and let $H$ be a largest connected component of
$G^\beta$. Then
\begin{equation*}
\P\big(|H|\leq (1-q)n\big)\leq  \rho^n,
\end{equation*}
where
\begin{equation}\label{EQ:TAU-DEF}
\rho=\Big(\frac{e(1-\beta)^a}{q}\Big)^q.
\end{equation}
\end{lemma}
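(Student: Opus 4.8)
The plan is to show that if the largest component $H$ of $G^\beta$ has size at most $(1-q)n$, then one can find a "small-ish" set $X$ with $qn < |X| \le n/2$ which is \emph{completely disconnected} from its complement in $G^\beta$ — i.e. all $e_G(X, V\setminus X)$ edges of $G$ crossing the cut were deleted by the percolation. By the edge-expansion hypothesis there are at least $a|X|$ such edges, so the probability that a \emph{given} such set $X$ is isolated is at most $(1-\beta)^{a|X|}$. Then I would take a union bound over all candidate sets $X$ of each size $j$ in the range $qn < j \le n/2$, using $\binom{n}{j} \le (en/j)^j$, and sum a geometric-type series to get the bound $\rho^n$ with $\rho = (e(1-\beta)^a/q)^q$.

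The first step is the combinatorial reduction. If $|H| \le (1-q)n$, then the components of $G^\beta$ can be grouped into a union $X$ with $qn < |X| \le n/2$: greedily add components until the accumulated size first exceeds $qn$; since each component has size $\le (1-q)n$ and we stop as soon as we pass $qn$, and $q < 1/3$ guarantees $(1-q)n < (1 - q)n$ leaves room, the total stays below... more carefully, adding one more component of size $\le (1-q)n$ to something of size $\le qn$ gives at most $qn + (1-q)n = n$; I need the sharper bound $\le n/2$, which follows because if the running total ever lands in $(qn, n/2]$ we stop, and if adding a component would overshoot past $n/2$ then that component alone has size $> n/2 - qn \ge n/6$, but then by taking its complement (a union of the remaining components) we again get a set in the right range — here is where $q < 1/3$ is used, to ensure $(qn, n/2]$ and its "reflection" cover all possibilities. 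This $X$ is a union of connected components of $G^\beta$, hence no retained edge leaves $X$.

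The second step is the probabilistic union bound. Writing $j = |X|$,
\begin{align*}
\P\big(|H| \le (1-q)n\big) \le \sum_{qn < j \le n/2} \binom{n}{j} (1-\beta)^{aj} \le \sum_{qn < j \le n/2} \Big(\frac{en}{j}\Big)^{j} (1-\beta)^{aj} = \sum_{qn < j \le n/2} \Big(\frac{e(1-\beta)^a}{j/n}\Big)^{j}.
\end{align*}
For $j/n \in (q, 1/2]$ the base $e(1-\beta)^a/(j/n)$ is largest when $j/n$ is smallest, i.e. $j/n \approx q$, giving base $\le e(1-\beta)^a/q$; and if this base is $< 1$ (which is the regime of interest — otherwise $\rho \ge 1$ and the bound is vacuous) then the term $(\text{base})^j$ is itself largest at the smallest $j \approx qn$, yielding each term $\le \rho^n$-ish, and the geometric decay in $j$ lets me absorb the $O(n)$ terms in the sum. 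A clean way: bound each term by $(e(1-\beta)^a/q)^{j}$ and note this is a geometric series in $j$ with ratio $e(1-\beta)^a/q < 1$ summing to at most $(e(1-\beta)^a/q)^{qn}/(1 - e(1-\beta)^a/q) = \rho^n / (1 - \rho^{1/q})$; a small additional argument (or slightly loosening $\rho$) removes the constant factor, or one simply states the bound as $\rho^n$ up to the implicit understanding that it is meaningful only when $\rho<1$.

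The main obstacle I expect is the combinatorial reduction in Step 1 — specifically, pinning down that one can always extract a cut set $X$ with $|X|$ in the \emph{half-open interval} $(qn, n/2]$ rather than merely $(qn, (1-q)n]$, since the edge-expansion definition only controls sets up to size $n/2$. This is exactly what forces the hypothesis $q < 1/3$: one needs the interval $(qn, n/2]$ together with the complementary interval to exhaust the possibilities for where the greedy running sum of component sizes can land. Getting this book-keeping exactly right (including the degenerate case where one single component has size between $n/2$ and $(1-q)n$, handled by passing to its complement) is the only genuinely delicate point; the rest is a routine entropy bound plus geometric summation.
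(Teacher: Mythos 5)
Your strategy is the same as the paper's: reduce the event $\{|H|\le(1-q)n\}$ to the existence of a union of components $X$ with $qn<|X|\le n/2$ that is disconnected from its complement in $G^\beta$, then union-bound over all candidate cuts using the edge-expansion hypothesis. However, two execution points fall short of the bound as stated. First, the estimate $\binom{n}{j}\le(en/j)^j$ followed by geometric summation gives at best $\sum_{j>qn}\bigl(e(1-\beta)^a/q\bigr)^j\le\rho^n/\bigl(1-\rho^{1/q}\bigr)$, which strictly exceeds $\rho^n$; the ``small additional argument'' you invoke to delete the factor $\bigl(1-\rho^{1/q}\bigr)^{-1}$ is not supplied, and the alternative of loosening $\rho$ changes the statement. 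The paper sidesteps this entirely: writing $z=(1-\beta)^a$ and $p=z/(1+z)$, it observes that $\sum_k\binom{n}{k}z^k=(1+z)^n\sum_k\binom{n}{k}p^k(1-p)^{n-k}\le(1+z)^n\,\P(\xi\ge qn)$ for $\xi\sim\mathrm{Binom}(n,p)$, and the Chernoff--Hoeffding tail bound then yields $\bigl[(z/q)^q(1-q)^{-(1-q)}\bigr]^n<(ez/q)^{qn}=\rho^n$ with no residual constant, using only $(1-q)^{-(1-q)}<e^q$.

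Second, your greedy grouping has a hole in the overshoot case. If the running total $T\le qn$ and the next component $K$ pushes the total past $n/2$, the complement of $K$ alone need not lie in $(qn,n/2]$: with $q=0.25$ and $|K|=0.3n$ the complement has size $0.7n$ (there $X=K$ itself is the right choice), and with $1/4<q<1/3$ one can even have $|K|\le qn$ while $T+|K|>n/2$, in which case the correct move is to take the complement of the \emph{accumulated} set $T\cup K$, whose size lies in $(qn,n/2)$ precisely because $2qn<(1-q)n$, i.e.\ $q<1/3$. The paper's Claim 1 organizes the cases around the largest component instead (take $V\setminus H$ if $|H|\ge n/2$; take $H$ if $qn\le|H|<n/2$; otherwise accumulate and pass to a complement, with $q<1/3$ entering only in the final contradiction $n-t<2t$), which avoids these pitfalls. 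Both issues are repairable without new ideas, but as written your argument delivers neither the clean case analysis nor the constant-free bound $\rho^n$.
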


\noindent For the bound to be nontrivial, we need that $q>e(1-\beta)^a$.

We start with an elementary observation. 	

\begin{claim}\label{CLAIM:LARGE-COMP}
If the largest connected component of $G^\beta$ has at most $n-t$ nodes, where
$t\le n/3$, then there is a set $X\subseteq V$ such that $t \le |X|\le n/2$ and
no edge of $G^\beta$ connects $X$ and $V\setminus X$.
\end{claim}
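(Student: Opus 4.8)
\textbf{Proof plan for Claim \ref{CLAIM:LARGE-COMP}.} The plan is to take the largest connected component $H$ of $G^\beta$ and grow it into a suitable cut-free set $X$ by absorbing other small components. First I would note that since $|V(H)| \le n-t$, the complement $V \setminus V(H)$ has at least $t$ vertices, and it is a disjoint union of connected components of $G^\beta$, none of which has an edge of $G^\beta$ to $V(H)$ (by maximality of the component structure). If $|V(H)| \le n/2$, we may simply take $X = V(H)$: then $t \le |X| \le n/2$ and no edge of $G^\beta$ leaves $X$, so we are done. The remaining case is $|V(H)| > n/2$, equivalently $|V \setminus V(H)| < n/2$; since $|V \setminus V(H)| \ge t$ as well, the set $X := V \setminus V(H)$ already satisfies $t \le |X| \le n/2$, and again no $G^\beta$-edge joins $X$ to $V(H) = V \setminus X$. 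So in fact a single case distinction on whether $|V(H)| \le n/2$ suffices, and the hypothesis $t \le n/3$ is only needed to guarantee $t \le n/2$ so that the interval $[t, n/2]$ is nonempty.

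\textbf{Proof plan for Lemma \ref{LEM:EXPAND}.} Given Claim \ref{CLAIM:LARGE-COMP}, the event $\{|H| \le (1-q)n\}$ implies (taking $t = qn \le n/3$) that there exists $X \subseteq V$ with $qn \le |X| \le n/2$ that is isolated from $V \setminus X$ in $G^\beta$. I would bound the probability of this event by a union bound over all candidate sets $X$. For a fixed $X$ with $|X| = j$ where $qn \le j \le n/2$, the edge-expansion hypothesis $(a,q)$ gives $e(X, V \setminus X) \ge a|X| = aj$, and the probability that all these edges are deleted in $G^\beta$ is $(1-\beta)^{e(X,V\setminus X)} \le (1-\beta)^{aj}$. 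Summing over the $\binom{n}{j}$ choices of $X$ and over $j$, and using $\binom{n}{j} \le (en/j)^j \le (e/q)^j$ for $j \ge qn$, the total is at most $\sum_{j \ge qn} (e/q)^j (1-\beta)^{aj} = \sum_{j\ge qn} \big(\tfrac{e(1-\beta)^a}{q}\big)^j$. When $q > e(1-\beta)^a$ the ratio is $<1$, so this geometric tail is dominated by its first term up to a constant factor; being slightly careful, the first term is $\big(\tfrac{e(1-\beta)^a}{q}\big)^{qn} = \rho^n$, and one absorbs the remaining geometric-series factor by either being a touch more generous in the binomial estimate or noting $\rho < 1$ makes the constant harmless for $n$ large — alternatively one states the bound as $\le \rho^n/(1-\text{ratio})$ and checks this is still of the claimed form. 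The main technical care is in the binomial coefficient estimate and in making sure the sum over $j$ collapses cleanly to $\rho^n$ rather than $\rho^n$ times a growing factor; using $\binom{n}{j} \le (en/j)^j$ with $j \ge qn$ so that $en/j \le e/q$ is the key inequality that makes every term uniformly bounded by a power of $\rho^{1/q \cdot q} = \rho$.

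\textbf{Expected main obstacle.} Neither statement is deep; the Claim is pure bookkeeping about components, and the Lemma is a first-moment (union bound) argument. The only place requiring genuine attention is controlling the sum $\sum_{j=\lceil qn\rceil}^{\lfloor n/2\rfloor} \big(\tfrac{e(1-\beta)^a}{q}\big)^j$ so that it is $\le \rho^n$ exactly (not merely $O(\rho^n)$), since the statement as written has no constant in front. I would handle this by noting that under $q > e(1-\beta)^a$ each successive term shrinks by a fixed factor $<1$, so the sum is at most the first term times a constant; to get the bound literally as $\rho^n$ one can either strengthen the expansion slightly in the exponent (replace $a|X|$ by $a|X|$ and observe the slack) or, more honestly, observe that for the bound to be "nontrivial" one anyway wants $\rho$ bounded away from $1$, in which case $\sum_j \rho^{n \cdot (j/(qn))}$-type tails are swallowed. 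In the write-up I would simply carry the geometric-sum constant and then note it is absorbed, or redefine $\rho$ with the harmless constant folded in; the cleanest route is to bound $\binom{n}{j}(1-\beta)^{aj}$ and sum, then remark the series is a geometric one with ratio $\le \rho^{1/q \cdot q}\cdot(\text{something})<1$.
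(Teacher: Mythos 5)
Your proof of Claim \ref{CLAIM:LARGE-COMP} has a genuine gap in the case $|V(H)|\le n/2$: you set $X=V(H)$ and assert $t\le |X|$, but nothing guarantees that the \emph{largest} component of $G^\beta$ has at least $t$ vertices. If $G^\beta$ is highly fragmented (say, every component is a singleton and $t\ge 2$), then $|V(H)|<t$ while $|V\setminus V(H)|>n-t\ge 2n/3>n/2$, so neither $V(H)$ nor its complement lands in the window $[t,n/2]$, and your case distinction covers nothing. (Your second case, $|V(H)|>n/2$, is fine.) This missing sub-case is exactly where the work lies: the paper handles it by greedily adding further components to $H$ as long as the union stays of size at most $n/2$; either the resulting $X$ reaches size at least $t$ (done), or adding one more component overshoots to an $X'$ with $n/2<|X'|\le |X|+t$, in which case one takes $V\setminus X'$ provided $|X'|\le n-t$, and the only way everything fails is $n-t<|X'|\le 2t$, i.e.\ $t>n/3$. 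Consequently your closing remark --- that $t\le n/3$ is needed only so that the interval $[t,n/2]$ is nonempty --- misreads the role of the hypothesis: it is what prevents the greedy aggregation from jumping over the window. Indeed the claim is false for $n/3<t\le n/2$: with $n=10$, components of sizes $4,4,2$ and $t=5$, the largest component has $4\le n-t$ nodes, yet no union of components has size exactly $5$, so no admissible $X$ exists.

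As a side remark, your sketch for Lemma \ref{LEM:EXPAND} follows the paper's route (union bound over cut sets, each killed with probability at most $(1-\beta)^{a|X|}$), and your concern about the geometric-series constant is legitimate; the paper avoids it by rewriting $\sum_k\binom{n}{k}z^k$ as $(1+z)^n\,\P(\xi\ge qn)$ for $\xi\sim\mathrm{Binom}(n,z/(1+z))$ and applying the Chernoff--Hoeffding bound, which yields the clean bound $\rho^n$ directly. But the gap in the Claim above is the thing to fix first, since the Lemma's proof invokes it.
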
 	

\begin{proof}(Claim \ref{CLAIM:LARGE-COMP})
Indeed, let $H_1$ be the nodeset of the largest connected component of $G^\beta$.
Then $|H|\le n-t$ by hypothesis. If $|H|\ge n/2$, then $X=V\setminus H$
satisfies the conditions in the claim. So suppose that $|H|<n/2$. If $t\le
|H|$, then $H$ satisfies the conditions in the claim. So suppose that $|H|<t$.
Let us add further connected components to $H$ as long as it remains at most
$n/2$ in cardinality, to get a set $X$. If $|X|\ge t$ then we are done, so
suppose that $|X|< t$. Adding any other connected component, we get a set $X'$
with $|X'|>n/2$ and $|X'|<|X|+t$. If $|X'|\le n-t$, then $V\setminus X'$
satisfies the conditions in the claim. So suppose that $|X'|>n-t$. But then
$n-t<|X'|\le |X|+t \le 2t$, and so $t>n/3$, contrary to the hypothesis.
\end{proof}

\begin{proof}(Lemma \ref{LEM:EXPAND})
Let $z=(1-\beta)^a$. For a fixed $k$-subset $X$ ($qn\le k\le n/2$), the graph
$G$ has at least $ak$ edges between $X$ and $V\setminus X$, and the probability
that none of them is selected is at most $(1-\beta)^{ak}=z^k$. So the
probability that there is a set $X\subseteq V$ with $q n\le |X|\le n/2$ and
having no edges between $X$ and $V\setminus X$ is at most
\begin{align*}
\sum\limits^{\lfloor n/2 \rfloor}_{k=\lceil qn \rceil} \binom{n}{k} z^k.
\end{align*}
Let $p=z/(1+z)$ and let $\xi$ be a $\mathrm{Binom}(n,p)$ distributed random
variable. Then, by the well-known Chernoff--Hoeffding bound,
\begin{align}\label{eq:Chernoff-Hoeffding}
 & \sum\limits^{\lfloor n/2 \rfloor}_{k=\lceil qn \rceil} \binom{n}{k}z^k = (1+z)^n
\sum\limits^{\lfloor n/2 \rfloor}_{k=\lceil qn \rceil} \binom{n}{k} p^k (1-p)^{n-k}\nonumber\\
&\leq  (1+z)^{\!n}\,\P(\xi\ge qn) \le(1+z)^n\left[
\left(\frac{p}{q}\right)^{\!q}\left(\frac{1-p}{1-q}\right)^{\!1-q}
\right]^{\!n}\nonumber\\
&=\left[\left(\frac{z}{q}\right)^q\frac1{(1-q)^{1-q}}\right]^n.
\end{align}
Here
\[
\frac{1}{(1-q)^{1-q}}=\left(1+\frac{q}{1-q}\right)^{\!1-q} <e^{q},
\]
hence, by \eqref{eq:Chernoff-Hoeffding} and Claim \ref{CLAIM:LARGE-COMP}
\[
\P\big(|H|\leq(1-q)n\big)< \left(\frac{e z}{q}\right)^{qn},
\]
proving the lemma.
\end{proof}

\begin{proof}(Lemma \ref{LEM:LARGE-BETA-STRONG})

Let $H_1$ denote the component of $G^\beta$ with largest number of nodes in
$C$, and let $H_2,\dots,H_{m}$ be the other components. Note that $H^C
\subseteq H_1$ where $H^C$ is the largest component in $G^{\beta}(C)$.

Let $h_j=|H_j|$ and
$c_j = |C \cap H_j|$. Let $p_j$ and $q_j$ denote the probability that $H_j$ is
not infected by $\S_V$ and $\S_C$, respectively. Then
\[
p_j = \prod_{i=0}^{k-1} \left(1-\frac{h_j}{n-i}\right),
 \]
where the last inequality holds whenever $h_j\le n-k+1$; else, $p_j=0$.
Similarly,
\[
q_j= \prod_{i=0}^{k-1} \left(1-\frac{c_j}{r-i}\right),
\]
where again the last inequality holds if $c_j\le r-k+1$ and $0$ otherwise. Then
\begin{align}\label{EQ:GAIN}
\E_{\beta}(|G^\beta(\S_V)|)&-\E_{\beta}(|G^\beta(\S_C)|) = \sum_{j=1}^{m} h_j(1-p_j) - \sum_{j=1}^{m} h_j(1-q_j)\nonumber\\
&= \sum_{j=1}^{m} h_j(q_j-p_j).
\end{align}

The main idea of the proof is that we partition the index set $K=\{1, \dots,
m\}$ into four sets:
\begin{align*}
    K_1 &:= \{1\},\\
    K_2 &:= \{j\in K:~h_j=1,c_j=0\},\\
    K_3 &:= \{j\in K \setminus K_1:~h_j\le c_j/(c-s)\},\\
    K_4 &:= K\setminus K_1\setminus K_2\setminus K_3.
\end{align*}
Let $V_i=\cup_{j\in K_i}H_j$.

We lower bound the sum in equation \eqref{EQ:GAIN} using a different estimate
over each set $K_j$. There are two sets where uniform seeding is more dangerous
($K_2$ and $K_4$), one set where the two seedings are essentially equally
dangerous ($K_1$, the giant component) and one set where the central seeding is
more dangerous ($K_3$), but this set $K_3$ only contains components which have
a relatively large part in $C$ compared to $V\setminus C$, and since the giant
component is quite large in $G_1$, the components in $K_3$ cannot have too much
weight
We make this intuition precise in the computation below. 	

First, we fix the percolation $G^\beta$, and estimate the expectations over the
choice of seed sets. We start with $K_1$, which only contains the index of the
component with the largest number of nodes in $C$. This component will have a
non-empty intersection with both $\S_V$ and $\S_C$ with high probability. More
exactly,
\begin{align}		\label{EQ:K1S}
\E_\beta(|V_1\cap G^\beta(\S_V)|&-\E_\beta(|V_1\cap G^\beta(\S_C)|= h_1(q_1-p_1) \ge -h_1 p_1\nonumber\\
&\ge - h_1 \left(1-\frac{h_1}{n}\right)^k \ge -n\left(1-\frac{h_1}{n}\right)^k\ge -n e^{-h_1 k/n}.
\end{align}

Next we consider $K_2$, the index set of those components of $G^\beta$ that are
isolated nodes of $V\setminus C$. Clearly $q_j=1$ and
$p_j=\prod_{i=0}^{k-1}\left(1-\frac{1}{n-j}\right) =\frac{n-k}{n}$ for $j\in K_2$.
So
\begin{align}
\label{EQ:K2S}
&\E_\beta(|V_2\cap G^\beta(\S_V)|)-\E_\beta(|V_2\cap G^\beta(\S_C)|)
=\sum_{j \in K_2} h_j(q_j-p_j) =\frac{k}{n}\left|V_2 \right|.
\end{align}
For $K_3$ we use the lower bound
\begin{align}\label{EQ:K3S}
\E_\beta(|V_3\cap G^\beta(\S_V)|)&-\E_\beta(|V_3\cap G^\beta(\S_C)|)=
\sum_{j \in K_3} h_j (q_j-p_j) > - \sum_{j \in K_3} h_j \nonumber\\
&\ge - \sum_{j \in K_3} \frac{c_j}{c-s} \geq - \frac1{c-s} |C\setminus V_1|.
\end{align}
Finally, if $j\in K_4$, then it must satisfy
\begin{equation*}		
1-\frac{h_j}{n} \le 1-\frac{c_j}{r-k},
\end{equation*}
which implies that for these components $q_i\ge p_i$ and so
\begin{equation}\label{EQ:K4S}
\E_\beta(|V_3\cap G^\beta(\S_V)|)-\E_\beta(|V_3\cap G^\beta(\S_C)|)\ge 0.
\end{equation}

Summing \eqref{EQ:K1S}--\eqref{EQ:K4S}, we get
\begin{align}\label{ES-MAIN}
\E_\beta(|G^\beta(\S_V)|)&-\E_\beta(|G^\beta(\S_C)|)\nonumber\\
&\ge -ne^{-|V_1| k/n} + \frac{k}{n}|V_2| - \frac1{c-s} |C\setminus V_1|.
\end{align}

To compute the expectation of this over the percolation, let us denote the
degree of the $i^{th}$ node in $V\setminus C$ by $b_i$. Then by Jensen's
inequality (since $(1-\beta)^x$ is convex),
\begin{equation}\label{eq:jensen}
\E_\beta(|V_2|)= \sum_{i=1}^{n-r} (1-\beta)^{b_i} \ge  (n-r)(1-\beta)^b.
\end{equation}

Recall $H^C$. By applying Lemma~\ref{LEM:EXPAND} to $G_1$, we have
$|V_1|=|H_1|\geq |H_1^C| > (1-q)r$ with probability at least $1-\rho^r$, where
$\rho$ is defined by \eqref{EQ:TAU-DEF}. Hence,
\[
\E\big(e^{-|V_1| k/n}\big)\le   e^{-(1-q)c k} + \rho^r\le e^{-2c k/3} + \rho^r,
\]
and
\begin{align*}
\E \left( \left|H^C \right| \right)    \geq (1-q)r \P \left(\left|H^C \right|>(1-q)r \right) \geq (1-q)(1-\rho^r)r
\end{align*}
resulting in
\begin{align}
\nonumber
&\E\left(|C\setminus V_1|\right)\le \E \left( \left|C \setminus H^C \right| \right)=r-\E \left( \left|H^C  \right | \right) \leq \\
\label{EQ:C_minus_V_1}
& r \left[1-(1-q)(1-\rho^r)\right] \leq qr+r \rho^r.
\end{align}

To sum up,
\begin{align*}
\E&(|G^\beta(\S_V)|)-\E(|G^\beta(\S_C)|)\nonumber\\
&\ge -ne^{-2c k/3} + \frac{k}{n} \left(1-\frac{k}{2n} \right)(n-r)(1-\beta)^b - \frac1{c-s} qr - \Big(n+\frac{r}{c-s}\Big)\rho^r\nonumber\\
&\ge  s(1-c)(1-\beta)^b n - \frac{c}{c-s} qn - \left( 1+\frac{c}{c-s}\right)n\rho^r-ne^{-2c k/3}.
\end{align*}
This proves the lemma.
\end{proof}

\begin{proof*}{Theorem \ref{t:main}}
We start with the small $\beta$ case. We need the following fact:

\begin{claim}
\label{c:degrees}
Let $H$ be a graph with $N$ nodes and edge-expansion $(a,q)$ ($a\ge 1,
0<q<1/2$). Then the average degree in $H$ is at least $2a\frac{N-1}{N+1}$.
\end{claim}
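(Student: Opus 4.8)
The plan is to bound the number of edges of $H$ from below by averaging the edge-expansion inequality over all (roughly) balanced bipartitions of $V(H)$, and then to read off the average degree from $\overline{\deg}(V(H))=2|E(H)|/N$. The underlying point is that a random cut of size about $N/2$ separates about half of the edges of $H$, while edge-expansion forces every such cut to contain at least $aN/2$ edges.

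Concretely, I would take $k=\lfloor N/2\rfloor$. As long as $qN<k$ (automatic for even $N$, and true for odd $N$ whenever $q<\tfrac12-\tfrac1{2N}$ — in particular throughout the range $q<\tfrac13$ in which the Claim is used; outside that range no admissible cut exists and the statement is vacuous), every $k$-element set $X\subseteq V(H)$ lies in the window $qN<|X|\le N/2$, so the hypothesis gives $e(X,V(H)\setminus X)\ge ak$. Summing this over all $\binom{N}{k}$ such sets $X$, and counting on the other side that a fixed edge of $H$ is separated by exactly $2\binom{N-2}{k-1}$ of them (those that contain exactly one of its two endpoints), gives
\[
2\binom{N-2}{k-1}\,|E(H)|=\sum_{|X|=k}e(X,V(H)\setminus X)\ge ak\binom{N}{k}.
\]
Since $\binom{N}{k}/\binom{N-2}{k-1}=\tfrac{N(N-1)}{k(N-k)}$, this reduces to $|E(H)|\ge\tfrac{aN(N-1)}{2(N-k)}$, and using $N-k=\lceil N/2\rceil\le\tfrac{N+1}{2}$ one gets $|E(H)|\ge\tfrac{aN(N-1)}{N+1}$, i.e. $\overline{\deg}(V(H))=2|E(H)|/N\ge 2a\tfrac{N-1}{N+1}$. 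Equivalently, one may argue probabilistically: for a uniformly random $k$-subset $X$ one has $\E\,e(X,V(H)\setminus X)=|E(H)|\cdot\tfrac{2k(N-k)}{N(N-1)}$, and this must be at least $ak$.

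The binomial bookkeeping is routine; the only subtle point — and a minor one — is that $k$ has to be taken as large as the edge-expansion window permits, namely $k=\lfloor N/2\rfloor$, because this is exactly the choice that turns the factor $\tfrac{N-1}{N-k}$ into the asserted $\tfrac{2(N-1)}{N+1}$ for odd $N$ (and into the slightly stronger $\tfrac{2(N-1)}{N}$ for even $N$); any smaller $k$ would give a weaker constant. The accompanying thing to check is precisely that this $k$ still lies in the interval $(qN,\,N/2]$, which is where the hypothesis $q<1/2$ (and, in the applications, $q<1/3$) is used.
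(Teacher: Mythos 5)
Your proof is correct and is essentially the same double-counting argument as the paper's: sum the expansion bound over all $\lfloor N/2\rfloor$-subsets and divide by the number of such subsets separating a fixed edge, namely $2\binom{N-2}{\lfloor N/2\rfloor-1}$. You are in fact slightly more careful than the paper, which only writes out the odd case and does not remark (as you do) that one must check $\lfloor N/2\rfloor>qN$ so that the expansion hypothesis actually applies to these cuts.
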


\begin{proof}(Claim \ref{c:degrees})
	
We check this for the case when $|V(H)|=2m+1$ is odd (the even case is
similar). For every $m$-subset $S\subseteq V$, there are at least $am$ edges
between $S$ and $V\setminus S$. This gives $am\binom{2m+1}{m}$ edges. Each edge
is counted $2\binom{2m-1}{m-1}$ times, hence
\[
|E(H)|\ge am\frac{\binom{2m+1}{m}}{2\binom{2m-1}{m-1}} = a\frac{(2m+1)m}{m+1}.
\]
Thus the average degree is
\[
\overline{\deg}(H) = \frac{2|E(H)|}{2m+1} = 2a\frac{m}{m+1}.
\]
This proves the Claim.
\end{proof}

Consider a graph $G_m$ from the given sequence. In the rest of this proof, we
omit the indices $m$, to make the arguments more readable. The Claim above
implies that

\begin{align*}
\overline{deg}(C) \geq 2a \frac{r-1}{r+1} \sim 2 a,
\end{align*}
therefore
\begin{align*}
&\frac{r-k}{r-1}\overline{\deg}(C)-\frac{n-k}{n-1}\overline{\deg}(V)
\sim \left(1-\frac{s}{c}\right) \overline{\deg}(C)-(1-s)\overline{\deg}(V) \geq \\
&\left(1-\frac{s}{c} \right)\overline{\deg}(C)-\left(c \overline{\deg}(C)+(1-c)b \right)= \\
&\left(1-c-\frac{s}{c} \right) \overline{\deg}(C)-(1-c)b \overset{s \leq \frac{1}{2}c(1-c)}{\geq} \\
&(1-c) \left(\frac{1}{2}\overline{\deg}(C)-b \right) \gtrsim (1-c)(a-b) \geq \varepsilon^2 \Rightarrow \\
&\frac{r-k}{r-1}\overline{\deg}(C)-\frac{n-k}{n-1}\overline{\deg}(V)
\geq   \varepsilon^2-o(1) \geq \frac{1}{2}\varepsilon^2=:c_1>0
\end{align*}
when $n$ is large enough. This means the conditions of Corollary
\ref{COR:SMALL-BETA-MAIN} are satisfied when $\beta$ is small enough.

The large beta case is an easy consequence of Remark \ref{r:large_beta}.
\end{proof*}

\section{Application: Chung-Lu model with power law degree distribution}
\label{sec:appl}

In this section, we apply Lemma \ref{LEM:LARGE-BETA-STRONG} to rigorously prove
the previous claim of~\cite{OCKLK}, that the uniform seeding can be more
dangerous in random graphs with power-law degree distribution with exponent
$\tau \in (2,3)$ if
\begin{equation}
\label{eq:OCKLK}
\frac{1}{n}\beta^{-\frac{1}{|\tau-3|}}  \ll s \ll \beta^{\frac{\tau-1}{3-\tau}}.
\end{equation}
In $\cite{OCKLK}$, this claim  appears as an if and only if statement, however,
in this section we only address the ``if'' part. Our proof strategy has already
been outlined in a previous paper \cite{OCKLK2}, but this is the first time
when we give a fully rigorous proof.

We start by defining the random graph distribution in the focus of this
section, which is one of the most standard models of networks with a power-law
degree distribution.

\begin{definition}
\label{def:CL} Let us denote by $\mathcal{CL}(\tau)$ the \emph{distribution of
Chung-Lu random graphs with exponent $\tau \in (2,3)$}, where the nodes $v_i
\in V$ are indexed from 1 to $n$, and $v_i$ and $v_j$ are connected by an edge
independently with probability $p_{ij}=\min \left \{\frac{d_id_j}{D}, 1 \right
\}$, where $d_i=\left( \frac{n}{i} \right)^{\frac{1}{\tau-1}}$ and
$D=\sum_{k=1}^n d_k$.
\end{definition}

Notice that the expected degree of a node with index $i$ in a graph sampled
from $\mathcal{CL}(\tau)$ is $d_i + o(1)$. Moreover, for  $d_i$ to be less than
some integer degree $d$, we need to have $ \frac{i}{n} \le  d^{1-\tau},$ which
hints that the exponent of the cumulative degree distribution is expected to be
around $1-\tau$, and therefore the degree distribution is expected to follow a
power-law with exponent $\tau \in (2,3)$. The average degree of the
distribution is expected to be a constant, because
\begin{align}
\label{eq:constant_deg}
\sum_{k=1}^n d_k \sim  \int_{1}^n \left( \frac{n}{x} \right)^{\frac{1}{\tau-1}} dx
=  \frac{\tau-1}{\tau-2} \left(n -n^{\frac{1}{\tau-1}} \right) =  \Theta(n).
\end{align}
Chung-Lu random graphs were introduced in \cite{chung2002connected}, we refer
to this paper and follow-up works for more precise statements on interpreting
Definition \ref{def:CL}. Here, we continue by stating an elementary result
about the edge expansion of Chung-Lu random graphs, which may have already
appeared in the literature in a similar form, but we are not aware of it.
\begin{lemma}
\label{lem:CL} If $G$ is sampled from $\mathcal{CL}(\tau)$ with $\tau \in
(2,3)$, and if $C$ is the set of vertices of $G$ with index $i \le cn$, with
$\frac{1}{\sqrt{n}}\ll c \ll \left( \log (n)\right)^{-\frac{\tau-1}{3-\tau}} $,
then $G[C]$ has edge expansion $\left(\frac{n}{4D}
c^{-\frac{3-\tau}{\tau-1}},0\right)$ asymptotically almost surely.
\end{lemma}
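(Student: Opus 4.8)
The plan is to verify the edge-expansion definition directly: fix a set $X \subseteq C$ with $0 < |X| \le |C|/2$, and show that with high probability $e(X, C \setminus X) \ge a|X|$ for $a = \frac{n}{4D} c^{-\frac{3-\tau}{\tau-1}}$, uniformly over all such $X$ (note $q=0$ here, so we must handle all nonempty $X$). First I would record the relevant deterministic quantities. Since the indices in $C$ run over $i \le cn$, the smallest expected degree in $C$ is $d_{cn} = c^{-\frac{1}{\tau-1}}$, and more generally for any $X$, the sum $\sum_{i \in X} d_i$ controls the expected number of edge-endpoints from $X$. The key point is that even a single vertex of $C$ has expected degree at least $c^{-1/(\tau-1)}$, which for the stated range of $c$ grows, so isolated or poorly connected vertices are unlikely.

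The heart of the argument is a union bound over subsets $X$ of each size $t := |X|$, combined with a concentration estimate for $e(X, C\setminus X)$. For fixed $X$ of size $t$, $e(X, C \setminus X)$ is a sum of independent indicators with mean $\mu_X = \sum_{i\in X}\sum_{j \in C\setminus X} p_{ij}$. I would lower bound $\mu_X$: since $|C \setminus X| \ge cn/2$ and the $d_j$ with $j \in C\setminus X$ sum to $\Theta(cn)$ worth of weight (up to the crude bound that $\sum_{j \in C} d_j = \Theta\big((cn)^{(\tau-2)/(\tau-1)} n^{1/(\tau-1)}\big) = \Theta(c^{(\tau-2)/(\tau-1)} n)$ by the same integral estimate as in \eqref{eq:constant_deg}, wait — more carefully, $\sum_{j\le cn} d_j \sim \frac{\tau-1}{\tau-2} n^{1/(\tau-1)}(cn)^{(\tau-2)/(\tau-1)}$), we get $\mu_X \gtrsim \big(\sum_{i\in X} d_i\big)\cdot \frac{1}{D}\sum_{j\in C\setminus X} d_j \gtrsim \big(\sum_{i \in X} d_i\big) \cdot \frac{n}{D} c^{-\frac{3-\tau}{\tau-1}}\cdot(\text{const})$, after checking that removing $X$ from $C$ changes the weight sum by at most a constant factor (true since $t \le cn/2$ and the weights are monotone, so the removed weight is at most half the total). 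Then $\sum_{i\in X} d_i \ge t\, d_{cn} = t\, c^{-1/(\tau-1)} \ge t$, but in fact I want the sharper $\mu_X \ge 2a t$ so that a multiplicative Chernoff bound gives $\P(e(X,C\setminus X) < a t) \le \P(e(X,C\setminus X) < \tfrac12 \mu_X) \le e^{-\mu_X/8} \le e^{-a t/4}$.

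The union bound then costs $\sum_{t=1}^{cn/2} \binom{cn}{t} e^{-at/4} \le \sum_{t\ge 1}\big(cn\, e^{-a/4}\big)^t$, which is $o(1)$ provided $cn\, e^{-a/4} \to 0$, i.e. $a \gg \log(cn)$, i.e. $\frac{n}{4D} c^{-\frac{3-\tau}{\tau-1}} \gg \log n$. Since $D = \Theta(n)$ by \eqref{eq:constant_deg}, this reads $c^{-\frac{3-\tau}{\tau-1}} \gg \log n$, equivalently $c \ll (\log n)^{-\frac{\tau-1}{3-\tau}}$ — exactly the stated upper bound on $c$. The lower bound $c \gg n^{-1/2}$ is what I expect to need to control the $p_{ij}=1$ truncation (the $\min$ in the definition): I would check that for $i,j \le cn$ with $c \gg n^{-1/2}$ the product $d_id_j/D$ can exceed $1$ only for the very highest-degree vertices, a vanishing fraction, and argue that this truncation only decreases a few $p_{ij}$ and can be absorbed into the constant (or restrict attention to a sub-sum of pairs with $p_{ij} < 1$). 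The main obstacle is making the weight-sum estimate $\sum_{j\in C\setminus X} d_j \gtrsim (\text{const})\sum_{j \in C} d_j$ robust enough to hold for \emph{every} $X$ simultaneously and getting the constant in $a$ to come out as $\frac{1}{4}$ rather than some worse constant — this requires being slightly careful that even when $X$ grabs the heaviest-weight vertices of $C$, at least a constant fraction of the total $C$-weight remains outside, which follows because the heaviest $t \le cn/2$ vertices carry at most... actually the heaviest half can carry more than half the weight for a power law, so I'd instead pair each $i \in X$ against the lightest vertices of $C\setminus X$, or simply use $|C\setminus X| \ge cn/2$ vertices each of weight $\ge d_{cn}=c^{-1/(\tau-1)}$ to get $\sum_{j\in C\setminus X} d_j \ge \tfrac12 cn \cdot c^{-1/(\tau-1)} = \tfrac12 c^{\frac{\tau-2}{\tau-1}} n$, which is cruder but suffices and is uniform in $X$; then $\mu_X \ge \big(\sum_{i\in X} d_i\big)\tfrac{1}{D}\tfrac12 c^{\frac{\tau-2}{\tau-1}} n \ge t\, c^{-\frac1{\tau-1}}\cdot \tfrac{c^{(\tau-2)/(\tau-1)} n}{2D} = \tfrac{n}{2D}c^{-\frac{3-\tau}{\tau-1}}\, t = 2at$, as desired.
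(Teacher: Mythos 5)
Your proposal matches the paper's proof essentially step for step: lower-bound every $p_{ij}$ for $i,j\in C$ by $d_{\lfloor cn\rfloor}^2/D=c^{-2/(\tau-1)}/D$ (valid since $c\gg n^{-1/2}$ gives $d_{\lfloor cn\rfloor}^2\le D$), deduce $\E\big(e(X,C\setminus X)\big)\ge \frac{n}{2D}c^{-\frac{3-\tau}{\tau-1}}|X|$ from $|C\setminus X|\ge cn/2$, apply the multiplicative Chernoff bound $e^{-\mu/8}$, and union-bound with $\binom{cn}{t}\le (ecn/t)^t$ to obtain a vanishing geometric series precisely when $c\ll(\log n)^{-\frac{\tau-1}{3-\tau}}$. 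The one wrinkle, which you correctly flag yourself, is that the intermediate inequality $\mu_X\gtrsim \frac1D\big(\sum_{i\in X}d_i\big)\big(\sum_{j\in C\setminus X}d_j\big)$ goes the wrong way because the $\min$ in $p_{ij}$ can only decrease it; bounding each $p_{ij}$ from below by the minimum-weight pair in $C$, as in your final display and in the paper, sidesteps this entirely.
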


Before proving Lemma \ref{lem:CL}, let us show how we can apply it to
rigorously prove the claim in equation \eqref{eq:OCKLK}, at least partially.

\begin{corollary}
\label{corollary:CL}
Let us consider a sequence of graphs $G_n$ sampled from $\mathcal{CL}(\tau)$
with $\tau \in (2,3)$, and let us define the central region $C$ as the $\lfloor cn \rfloor$
nodes with the largest expected degree (i.e., index) in $G_n$. Let us assume that the size of the seed set satisfies $s=\Theta(c)$ and $c-s=\Theta(c)$. Under the mild assumption $1-\beta = \Theta(1)$, if
\begin{align}
\label{eq:range}
 \sqrt{\frac{\log(n)}{n}} \ll s  &\ll \left(\frac{\beta}{ \log(n)} \right)^{{\frac{\tau-1}{3-\tau}}}
\end{align}
hold then the uniform seeding creates a larger epidemic than the central
seeding ($G_n$ has the weak switchover property) with probability tending to 1
as $n \rightarrow \infty$.
\end{corollary}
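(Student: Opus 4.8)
The plan is to obtain $\E(|G^\beta(\S_V)|)>\E(|G^\beta(\S_C)|)$ directly from Lemma~\ref{LEM:LARGE-BETA-STRONG}, feeding in the edge expansion of $G[C]$ supplied by Lemma~\ref{lem:CL}, after a routine passage from the expected degree sequence of the Chung--Lu model to the actual random graph. First, the stated range of $s$ puts $c$ into the window of Lemma~\ref{lem:CL}: since $s=\Theta(c)$, from $\sqrt{\log n/n}\ll s$ we get $c\gg n^{-1/2}$, and from $s\ll(\beta/\log n)^{(\tau-1)/(3-\tau)}\le(\log n)^{-(\tau-1)/(3-\tau)}$ we get $c\ll(\log n)^{-(\tau-1)/(3-\tau)}$. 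So a.a.s.\ $G[C]$ has edge expansion $(a_n,0)$ with $a_n=\tfrac{n}{4D}c^{-(3-\tau)/(\tau-1)}$, hence, by the monotonicity remark, edge expansion $(a_n,q)$ for every $q\in(0,1/3)$; by \eqref{eq:constant_deg} we have $D=\Theta(n)$, so $a_n=\Theta\big(c^{-(3-\tau)/(\tau-1)}\big)\to\infty$. A standard Chernoff bound gives, a.a.s., that the average degree $b$ of $V\setminus C$ in $G$ and the overall average degree $\overline{\deg}(V)$ are both $O(1)$ (their means are $O(1)$). Let $E_n$ be the a.a.s.\ event on which all of this holds, and fix a constant $b_{\max}$ with $b\le b_{\max}$ on $E_n$.

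On $E_n$ I apply Lemma~\ref{LEM:LARGE-BETA-STRONG} with expansion parameters $a=a_n$ and $q=q_n:=s/\log n$. The main term of \eqref{eq:lemma_main} is $s(1-c)(1-\beta)^{b}n\ge s(1-c)(1-\beta)^{b_{\max}}n=\Theta(sn)$, using $1-\beta=\Theta(1)$ and $c\to0$, so it remains to check that each error term is of smaller order. The term $\tfrac{c}{c-s}q_nn$ has bounded prefactor (as $c-s=\Theta(c)$) and so equals $\Theta(sn/\log n)=o(sn)$. The term $ne^{-2ck/3}$ with seed size $k=\lfloor sn\rfloor$ satisfies $ck=\Theta(s^2n)\gg\log n$ by the lower bound on $s$, hence it is $n^{-\omega(1)}$. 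For the $\rho_n^{r}$ terms ($r=|C|$) the upper bound on $s$ enters: from $\ln(1-\beta)\le-\beta$ we get $(1-\beta)^{a_n}\le e^{-a_n\beta}$, and $a_n\beta=\Theta\big(\beta\,c^{-(3-\tau)/(\tau-1)}\big)\gg\log n$ precisely because $s^{(3-\tau)/(\tau-1)}\ll\beta/\log n$; since also $q_n\ge(n\log n)^{-1/2}\ge n^{-1}$, we get $e(1-\beta)^{a_n}/q_n\to0$ and
\[
\rho_n^{r}=\Big(\tfrac{e\,(1-\beta)^{a_n}}{q_n}\Big)^{q_n r}\le\exp\!\big(-q_n c n\,(a_n\beta-\ln n-1)\big)=n^{-\omega(1)},
\]
using $q_n c n=\Theta(s^2n/\log n)\to\infty$. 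Hence on $E_n$,
\[
\E\big(|G^\beta(\S_V)|\mid G\big)-\E\big(|G^\beta(\S_C)|\mid G\big)\ \ge\ \Theta(sn)-o(sn)-n^{-\omega(1)}>0
\]
for all large $n$, and since $\P(E_n)\to1$ this gives the asserted inequality with probability tending to $1$.

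For the parenthetical weak-switchover statement it is enough to produce, for each such $G$, a small $\beta_1$ for which the central seeding is worse. On $E_n$, Claim~\ref{c:degrees} applied to $G[C]$ gives $\overline{\deg}(C)\ge\overline{\deg}(G[C])\ge 2a_n\tfrac{cn-1}{cn+1}\to\infty$, while $\overline{\deg}(V)=O(1)$; with $s<c$ this forces $\tfrac{r-k}{r-1}\overline{\deg}(C)\ge(1-s/c)\overline{\deg}(C)>\overline{\deg}(V)\ge\tfrac{n-k}{n-1}\overline{\deg}(V)$, so Theorem~\ref{THM:CENTRAL-EXTREME}(a) supplies the required $\beta_1$; together with $\beta_2=\beta$ this is exactly Definition~\ref{def:weak-swichover}, holding on the a.a.s.\ event $E_n$.

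I expect the main obstacle to be the calibration of the auxiliary parameter $q_n$: it must be $o(s)$, to kill the term $\tfrac{c}{c-s}q_nn$, yet polynomially bounded below and much larger than $(1-\beta)^{a_n}$, so that $\rho_n<1$ with enough margin to make $\rho_n^{r}=o(1/n)$. It is precisely this tension that forces the upper bound $s\ll(\beta/\log n)^{(\tau-1)/(3-\tau)}$ (which renders $(1-\beta)^{a_n}$ superpolynomially small in $n$), just as the lower bound $s\gg\sqrt{\log n/n}$ is what makes $ne^{-2ck/3}$ and $n\rho_n^{r}$ vanish; the rest is the routine Chernoff bookkeeping turning the prescribed expected degree sequence into the a.a.s.\ degree statistics of $G$ that the lemmas consume.
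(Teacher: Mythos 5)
Your proof is correct and follows essentially the same route as the paper: both apply Lemma~\ref{LEM:LARGE-BETA-STRONG} with the expansion constant supplied by Lemma~\ref{lem:CL} and verify that the two ends of the range \eqref{eq:range} make the error terms $\frac{c}{c-s}qn$, $n\rho^r$ and $ne^{-2ck/3}$ negligible against the main term $\Theta(sn)$. The only differences are your calibration $q = s/\log n$ in place of the paper's $q=\frac{s(c-s)(1-c)(1-\beta)^b}{2c}=\Theta(s)$ (both work), and that you additionally supply the small-$\beta$ side of Definition~\ref{def:weak-swichover} via Claim~\ref{c:degrees} and Theorem~\ref{THM:CENTRAL-EXTREME}(a), a step the paper leaves implicit.
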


Notice that with this choice of parameters, the number of seeds is linear only
the size of the central region, but sub-linear in the size of the graph.

As shown in Figure \ref{fig_appl}, the parameter ranges set by equation
\eqref{eq:range} form a subset of the parameters in equation \eqref{eq:OCKLK},
therefore, Corollary \ref{corollary:CL} is weaker than the claim in
\cite{OCKLK}. To generalize Corollary \ref{corollary:CL} for the remaining
parameter ranges, different proof methods are necessary.

\begin{figure*}[htb]
    \centering
    \includegraphics[width = .8\textwidth]{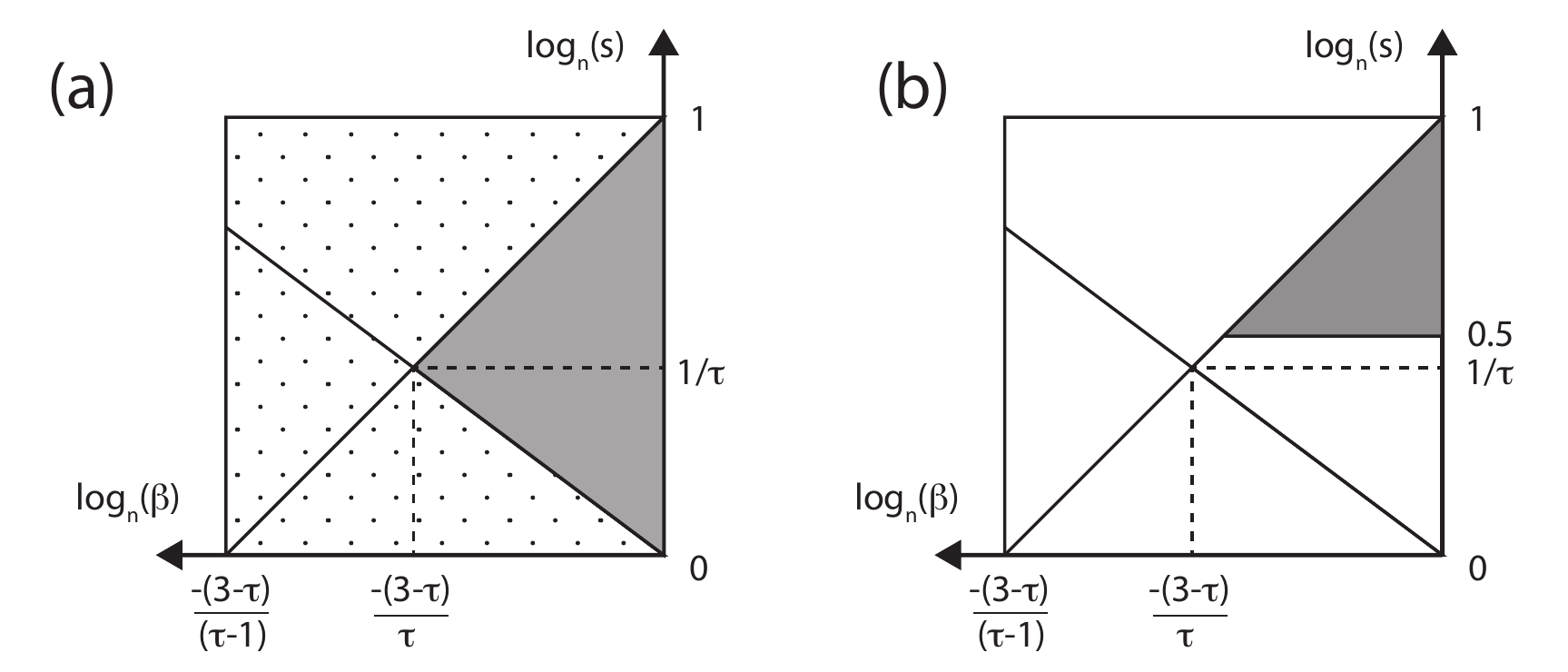}
    \caption{Phase diagrams for the switchover phenomenon on Chung-Lu random
    graphs with power-law degree distribution ($\tau \in (2,3)$).
    (a) With grey we show the region where the central area is more dangerous,
    as claimed in \cite{OCKLK}, and with the dotted pattern we show the region
    where the central area is more dangerous by equation~\eqref{eq:OCKLK},
    as claimed in \cite{OCKLK}.
    (b) With grey we show the region where the central area is more dangerous
    by Corollary \ref{corollary:CL}.}
\label{fig_appl}
\end{figure*}

\begin{proof*}{Corollary \ref{corollary:CL}}

We will use Lemma \ref{LEM:LARGE-BETA-STRONG} for Chung-Lu random graphs with
\begin{equation}
\label{eq:newq}
q=\frac{s(c-s)(1-c)(1-\beta)^b}{2c}.
\end{equation}
For Lemma \ref{LEM:LARGE-BETA-STRONG} to be applicable, we need to make sure that for our choice
\begin{equation}
\label{eq:q_check}
q=\frac{s(c-s)(1-c)(1-\beta)^b}{2c} >(1+\varepsilon)e(1-\beta)^a,
\end{equation}
and to satisfy equation \eqref{eq:lemma_main}, we need that
\begin{equation}
\label{eq:condition_check}
\frac{1}{2} s(1-c)(1-\beta)^b >\left( 1+\frac{c}{c-s}\right)\rho^r+ e^{-2c k/3},
\end{equation}
keeping in mind that $c$ and $s$ are not constants anymore. Condition
$q<\frac{1}{3}$ is trivially satisfied for large enough $n$ as $q=\Theta(s).$

Recall, that we chose $s=\Theta(c)$ and $c-s=\Theta(c)$. Notice that we can
apply Lemma \ref{lem:CL}, because the condition $\frac{1}{\sqrt{n}}\ll c \ll
(\log(n))^{-\frac{\tau-1}{3-\tau}} $ holds by \eqref{eq:range}. Then, since we
also know $b=\Theta(1)$ by equation \eqref{eq:constant_deg}, we can show that
equation \eqref{eq:q_check}  holds if
\begin{align*}
\log(c) \gg \log(1-\beta) {}c^{-\frac{3-\tau}{\tau-1}},
\end{align*}
which is implied by equation \eqref{eq:range} as
\begin{align*}
\log(1-\beta) c^{-\frac{3-\tau}{\tau-1}} \leq&-\beta  c^{-\frac{3-\tau}{\tau-1}}
=-\left(\frac{c}{\beta^{\frac{\tau-1}{3-\tau}}} \right)^{-\frac{3-\tau}{\tau-1}} \stackrel{\eqref{eq:range}}{\ll} - \log(n) \\
=& 2 \log \left(\frac{1}{\sqrt{n}} \right) \ll \log (c).
\end{align*}
Similarly, equation \eqref{eq:condition_check} holds if
\begin{align*}
\rho^r+e^{-2c k/3}  \ll s.
\end{align*}
By equation \eqref{eq:constant_deg} and substititing the definition of $\rho$
from equation \eqref{EQ:TAU-DEF}, we get
\begin{align*}
(1+\varepsilon)^{-qr} +e^{-2c k/3} \ll s,
\end{align*}
which must hold because equation \eqref{eq:range} implies
\begin{align*}
\Theta(qr) =  \Theta(ck) = \Theta\left(s^2 n \right) \stackrel{\eqref{eq:range}}{\gg} \log(n).
\end{align*}

Therefore, Lemma \ref{lem:CL} implies that for these parameter ranges, the
uniform seeding can be more dangerous, and weak switchover occurs.
\end{proof*}

We conclude the section by providing the proof of Lemma \ref{lem:CL}.

\begin{proof*}{Lemma \ref{lem:CL}}
For $S \subset V$ with $|S| \le |C| /2$, let $X_S$ be the number of edges
between $S$ and $C\setminus S$. In the first part of the proof, we show that
$\E(X_S)  \geq \frac{n}{2D} c^{-\frac{3-\tau}{\tau-1}} |S|$ for every $S$, and
in the second part we prove that the variables $X_S$ are all well-concentrated
around their expectation.

Note that since we assumed $c \gg \frac{1}{\sqrt{n}}$, and since we know
$D=\Theta(n)$, we have that $\min \left \{d_{\lfloor cn \rfloor}^2,D \right \}
=d_{\lfloor cn \rfloor}^2 \geq c^{-\frac{2}{\tau-1}}$. Then, we compute the
expectation of $X_S$ as

\begin{align*}
\E (X_S)=& \sum\limits_{i \in S} \sum\limits_{i\in C\setminus S} \min \left\{\frac{d_id_j}{D}, 1 \right \}= \sum\limits_{i \in S} \sum\limits_{i\in C\setminus S} \frac{d_id_j}{D} \ge \sum\limits_{i \in S} \sum\limits_{i\in C\setminus S} \frac{d_{\lfloor cn \rfloor}^2}{D}  \\
 \ge& \frac{|S| (|C|-|S|)}{D}  c^{-\frac{2}{\tau-1}} \overset{|S| \leq \frac{|C|}{2}}{=} \frac{n}{2D} c^{-\frac{3-\tau}{\tau-1}}|S|.
\end{align*}

Next, we use the union bound, and well-known multiplicative Chernoff bounds on binomial random variables, to prove that the random variables $X_S$ are concentrated around their expectation. We bound

\begin{align*}
 \P\left(\exists S \subset C, |S| \le \frac{|C|}{2} \text{ with } X_S \le \frac12\E(X_S) \right)
 &\le \sum\limits_{\substack{S \subset C \\ |S| \le \frac{|C|}{2}}} \P \left(X_S \le \frac12\E(X_S) \right) \\
& \le \sum\limits_{\substack{S \subset C \\ |S| \le \frac{|C|}{2}}} e^{-\frac18 \E(X_S) } \\
&= \sum\limits_{\substack{S \subset C \\ |S| \le \frac{|C|}{2}}} e^{- \frac{n}{16D}  c^{-\frac{3-\tau}{\tau-1}}|S|} .
\end{align*}

Set $\eta = \frac{n}{16D}  c^{-\frac{3-\tau}{\tau-1}}$. Let us change the
indexing of the sum to the size of the set $S$, and apply a standard upper
bound on binomial coefficients to obtain
\begin{equation*}
\sum\limits_{\substack{S \subset C \\ |S| \le \frac{|C|}{2}}} e^{- \eta |S|}
= \sum\limits_{k=1}^{\frac{\lfloor cn \rfloor}{2}} \binom{\lfloor cn \rfloor}{k} e^{- \eta k}
\le \sum\limits_{k=1}^{\frac{\lfloor cn \rfloor }{2}}  \left( \frac{enc}{k} \right)^k e^{- \eta k}
\le \sum\limits_{k=1}^{\frac{\lfloor cn \rfloor }{2}}  \left( cn e^{1- \eta} \right)^k.
\end{equation*}
Notice that we have arrived at a geometric series with common ratio $cn e^{1-
\eta}$, which tends to zero as long as $\eta = \frac{n}{16D}
c^{-\frac{3-\tau}{\tau-1}}  \gg \log(n)$; an asymptotic inequality that holds
by the assumption $c \ll \left( \log (n) \right)^{-\frac{\tau-1}{3-\tau}}$.
Therefore, we arrived to the equation
$$ \P\left(\exists S \subset C, |S| \le \frac{C}{2} \text{ with } X_S \le
\frac{n}{4D} c^{-\frac{3-\tau}{\tau-1}}|S|  \right) \rightarrow 0,$$
which completes the proof of the lemma.
\end{proof*}

\section{Concluding remarks}

In this paper, we gave the first fully rigorous proofs of the switchover
phenomenon, introduced in \cite{OCKLK}, for general classes of graphs. We
showed that weak switchover exists under mild conditions on the graph, and we
also showed sufficient conditions for strong switchover.

One limitation of the current paper is that, in the case of the strong
switchover, the size of the seed set was assumed to be fairly large, of size
$\Omega(n)$. Although for the Chung-Lu model in Section \ref{sec:appl} we did
study smaller seed sets, and we did use the machinery of the strong switchover
proofs, we were only able to show the existence of the weak switchover
phenomenon. This agrees with the simulations and heuristic derivations of the
previous work \cite{OCKLK}, which also claimed that Chung-Lu models exhibit
weak switchover, but not strong switchover. However,\cite{OCKLK} also showed
that the strong switchover phenomenon occurs with much smaller seed sets on
geometric graphs, notably on the commuting network of Hungary constructed from
real data, and also random graph models with an underlying geometry.
Unfortunately, our current results do not say much about such geometric graphs.

As in this paper, proving the existence of strong switchover with small seed
sets would boil down to the distribution of component sizes in the percolated
graph $G^\beta$. However, contrary to this paper, we need the existence of at
least medium size components also in the periphery, because if most of the
peripheral nodes in $G^\beta$ are contained in bounded-size components, then we
need $\Omega(n)$ seeds to have strong switchover (even to have an epidemic of
size $\Omega(n)$). Finding appropriate conditions for such medium size
components in the periphery which could lead to the existence of strong
switchover with small seed sets (say, of size $\sqrt{n}$ or even $\log n$) is
an interesting future direction.

\medskip

{\bf Acknowledgment.} The authors are thankful to Marianna Bolla for her
insightful remarks. This work has been supported by the Dynasnet ERC Synergy
project (ERC-2018-SYG 810115). Gergely Ódor was supported by the Swiss National
Science Foundation, under grant number P500PT-211129.

\end{document}